\documentclass[12pt,a4paper,oneside]{amsart}
\usepackage{hyperref}
\usepackage{paralist, mathdots} %

\usepackage{amssymb,tabularx,nicematrix} %
\usepackage{amsmath,tikz,multirow,lscape,subcaption,ytableau,pifont}

\usepackage{centernot}

\usepackage{amsthm}
\usepackage{soul, arydshln}
\usepackage{scalerel}

\textwidth 14.5cm
\oddsidemargin 1cm
\newtheorem{proposition}{Proposition}[section]

\newtheorem{theorem}[proposition]{Theorem}

\newtheorem{corollary}[proposition]{Corollary}
\newtheorem{lemma}[proposition]{Lemma}
\theoremstyle{definition}
\newtheorem{example}[proposition]{Example}
\newtheorem{remark}[proposition]{Remark}
\newtheorem{definition}[proposition]{Definition}
\newtheorem{question}[proposition]{Question}

\newcommand{\bR}{\mathbb{R}}
\newcommand{\bN}{\mathbb{N}}

\newcommand{\A}{\mathcal{A}}
\newcommand{\bdelta}{\boldsymbol{\delta}}
\newcommand{\bepsilon}{\boldsymbol{\epsilon}}

\newcommand{\mc}{\mathcal}
\DeclareMathOperator{\iso}{\iota}

\newcommand{\1}{{\bf 1}}

\newcommand{\bfb}{{\bf b}}
\newcommand{\bfc}{{\bf c}}
\newcommand{\bfd}{{\bf d}}
\newcommand{\bfr}{{\bf r}}
\newcommand{\bfp}{{\bf p}}
\newcommand{\bfq}{{\bf q}}

\let\epsilon\varepsilon

\newcommand{\tuple}{\mathbf}
\newcommand{\m}{\tuple{m}}
\newcommand{\n}{\tuple{n}}

\newcommand{\trans}{^\top}

\newcommand{\LL}{\mathcal{D}}

\tikzset{every picture/.append style={scale=0.6}}

    \definecolor{helena}{rgb}{.2,.8,.4}
    \definecolor{polona}{rgb}{.2,.2,.8}
    \definecolor{rupert}{rgb}{0,.5,.5}
   \definecolor{todo}{rgb}{.8,.2,.2}

\AtBeginDocument{%
   \def\MR#1{}
}

\begin{document}
\title{Orthogonalisability of joins of graphs}
\author{Rupert H. Levene, Polona Oblak, Helena \v Smigoc}
\address[R.~H.~Levene and H.~\v Smigoc]{School of Mathematics and Statistics, University College Dublin, Ireland}
\email{rupert.levene@ucd.ie}\email{helena.smigoc@ucd.ie}
\address[P.~Oblak]{University of Ljubljana, Faculty of Computer and Information Science
and Faculty of Mathematics and Physics, Slovenia; Institute of Mathematics, Physics, and Mechanics, Slovenia}
\email{polona.oblak@fri.uni-lj.si}

 \subjclass[2010]{05C50, 15B57, 15A18, 15B10}
\keywords{Symmetric matrix; Orthogonal matrix; Inverse eigenvalue problem; Minimal number of distinct eigenvalues; Join of graphs}
\bigskip

\begin{abstract}
A graph is said to be orthogonalisable if the set of real symmetric matrices whose off-diagonal pattern is prescribed by its edges contains an orthogonal matrix. We determine some necessary and some sufficient conditions on the sizes of the connected components of two graphs for their join to be orthogonalisable.  In some cases, those conditions coincide, and we present several families of joins of graphs that are orthogonalisable. %
\end{abstract}

\maketitle

\section{Introduction}
Let $G=(V(G),E(G))$ be a simple graph with vertex set $V(G)=\{1,\dots,n\}$, and consider $S(G)$, the set of all real symmetric $n \times n$ matrices $A=(a_{ij})$ such that, for $i \neq j$, $a_{ij} \neq 0$ if and only if $\{i,j\} \in E(G)$. There are no restrictions on the diagonal entries of $A$.
The minimum number of distinct eigenvalues of a graph
$$q(G) = \min\{q(A)\colon A \in S(G)\},$$
where $q(A)$ denotes the number of distinct eigenvalues of a square matrix~$A$, is one of the parameters motivated by the Inverse Eigenvalue Problem for Graphs. The origins of the study of this parameter can be found in several key texts \cite{MR347649,MR572262,MR1220704}, and many papers have been published on the subject~\cite{2013-ELA-Ahmadi-Cavers-Fallat-q,MR3665573,18-Bjorkman-q,MR2735867,2022-Hogben-Lin-Shader-IEPG-book,MR1899084,MR3891770}.

Since the diagonal elements of matrices in $G$ are not restricted, it follows that, provided $G$ has at least one edge, we have $q(G)=2$ if and only if $S(G)$ contains an orthogonal symmetric matrix. For this reason, we say in this case that $G$ is \emph{orthogonalisable}.
Although there is no general characterisation of orthogonalisable graphs, several families of connected orthogonalisable graphs are known
(see, e.g.,~\cite{2013-ELA-Ahmadi-Cavers-Fallat-q,2019-ELA-Bailey-Craigen,2023-LAA-Barrett-Fallat-regular-graphs-q=2,2024-Fallat-distance-regular,2022-LAA-Fallat-Mojallal,2018-LAA-Johnson-Zhang, MR3891770}), %
 and several necessary conditions %
have been determined~\cite{2017-Special-Matrices-Adm-Fallat-q=2,corrigendum,2013-ELA-Ahmadi-Cavers-Fallat-q,2023-LAA-Barrett-Fallatt-sparsity-q=2}. It is suspected that the complete characterisation of orthogonalisable graphs is a difficult problem.

The minimum number of distinct eigenvalues of joins of graphs has been investigated in several works, e.g., \cite{2023-Abiad-Bordering-of-symmetric-matrices,2017-Special-Matrices-Adm-Fallat-q=2,2013-ELA-Ahmadi-Cavers-Fallat-q,levene2020orthogonal}. Recall that the join of two graphs $G$ and $H$, denoted $G\vee H$, is the graph formed by adding all possible edges between $G$ and $H$ to the disjoint union $G\cup H$. It is known that $q(G \vee H)=2$ for all connected graphs $G$ and $H$ such that $\big||G|-|H|\big| \leq 2$,~\cite[Theorem~3.4]{levene2023distinct}; see also Ahmadi~et~al.~\cite[Theorem~5.2]{2013-ELA-Ahmadi-Cavers-Fallat-q} for the case $G=H$ and Monfared and Shader~\cite[Theorem~5.2]{MonfaredShader2016-Nowhere-zero-eigenbasis} for  $|G|=|H|$. This implies that there does not exist any set of forbidden subgraphs of graphs with $q=2$.  It was shown in~\cite[Example~3.5]{levene2023distinct} that for trees $G$ and $H$ we have $q(G \vee H) = 2$ if and only if $\big||G|-|H|\big|\leq 2$. In particular, $q(P_m\vee P_n)\geq 3$ for $|m-n|\geq 3$, where $P_n$ is the path on $n$ vertices.
Recently, joins of disconnected graphs were investigated in~\cite{2017-Special-Matrices-Adm-Fallat-q=2,corrigendum,levene2020orthogonal,levene2023distinct}, where particular attention was given to joins of unions of complete graphs and paths.

\subsection{Notation and terminology}
We write $\bN=\{1,2,3,\dots\}$ and $\bN_0=\{0\}\cup \bN$. For $k\in \bN$, we write $[k]=\{1,2,\dots,k\}$. Vectors are written in boldface, and the $i$th component of a vector $\mathbf{v}$ is written as $v_i$. 
For integer tuples $\mathbf{x}\in \bN_0^t$ we write $\iota(\mathbf{x})=|\{i\in [t]:x_i=1\}|$ and $|\mathbf{x}|=\sum_{i\in [t]}x_i$. If $x_i\ge x_{i+1}$ for all $i<t$, then we say that $\mathbf{x}$ is non-increasing. When necessary, we extend integer tuples by appending zero parts at the end, and adopt the useful shorthand %
$(k^s):=(k,k,\dots,k)$, where $k$ appears $s$ times. At times we will use this notation for subtuples, too, so that, for example $(4,3^2,1^5)=(4,3,3,1,1,1,1,1)$. We use standard notation for conjugate partitions: for $\mathbf{x}\in \bN_0^\ell$, we set
$x_j^*=|\{ i\in [\ell] : x_i\ge j\}|$ and $\mathbf{x}^*=(x_j^*)$. 
  We write $\preceq_w$ for the weak majorisation order on non-increasing integer tuples, defined by %
  $\mathbf{x} \preceq_w \mathbf{y}$ if and only if $\sum_{i\in [k]} x_i\leq \sum_{i\in [k]} y_i$ for all $k\in \bN$. If in addition $|\mathbf{x}|=|\mathbf{y}|$, then we say that $\mathbf{y}$ majorises $\mathbf{x}$ and write $\mathbf{x} \preceq \mathbf{y}$.

The notation $\mathbf{e}_j$ is used for the $j$th standard basis vector (whose length will be clear from the context) with every entry equal to $0$ except for its $j$th entry, which is $1$, and $\mathbf{1}_s$ is the vector with $s$ entries, all equal to~$1$. If $r,s\in \bN$ and $\mc S\subseteq\bR$, then $\mc S^{r\times s}$ is the set of $r\times s$ matrices with entries in~$\mc S$. The transpose of a matrix $X$ is written as $X\trans$. If $X=(x_{ij})$ and $Y=(y_{ij})$ are real matrices of equal size and $x_{ij}\le y_{ij}$ for every $i,j$, then we write $X\le Y$. The notation $\mathbf{v}\le \mathbf{w}$ for real vectors $\mathbf{v}$ and $\mathbf{w}$ of equal size has the same meaning.

The graphs considered in this work are all finite, simple, undirected graphs with at least one vertex. We say that a graph $G$ is \emph{sized by} $\m\in \bN^k$ if $G$ has $k$ connected components with sizes $m_1,\dots,m_k$, respectively, and a pair of graphs $(G,H)$ is \emph{sized by} a pair $(\m,\n)\in \bN^k\times \bN^\ell$ if $G$ is sized by~$\m$ and $H$ is sized by~$\n$.  Note that $\iota(\n)$ is the number of isolated vertices of $H$. %

\subsection{Overview of the main results} 
In this work, we investigate pairs $(\m,\n) \in \bN^k \times \bN^\ell$ for which any pair of graphs $(G, H)$ sized by $(\m,\n)$ satisfies $q(G \vee H) = 2$. Our main results are summarised in the following theorem. 

\begin{theorem}\label{thm:main:nec-suff-q=2}
Let $k\leq \ell$, $\psi=\lceil \ell/k\rceil$ and $(\m,\n)\in \bN^k\times \bN^\ell$.
The following statements satisfy the implications $\text{(i)}\implies\text{(ii)}\iff\text{(iii)}\implies\text{(iv)}$:

\begin{enumerate}[(i)]
    \item $|\n|-n_2^*-n_3^*\le |\m|\le |\n|+m_{\psi+1}^*+m_{\psi+2}^*$, $\m\geq (\psi^k)$ and $$\sum_{j=1}^\psi n^*_j\ge  k\psi+\max\{0,|\n|-|\m|-n^*_{\psi+1}-n^*_{\psi+2}\}.$$
    \item  $q(G\vee H)=2$ for all graphs $G,H$ which are sized by $(\m,\n)$. 
    \item $q(G\vee H)=2$ for $G=\bigcup_{i=1}^k P_{m_i}$ and $H=\bigcup_{j=1}^\ell P_{n_j}$.
    \item $|\n|-n_2^*-n_3^*\le |\m|\le |\n|+m_{\psi+1}^*+m_{\psi+2}^*$, $\m\geq (\psi^k)$, and if $k<\ell$, then we also have \[  \iota(\n)\le 
  \max\left\{(\psi-1)k,\frac{\psi}{\psi-1}(\ell-k)\right\}.\]
\end{enumerate}
\end{theorem}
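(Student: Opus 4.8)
Since the pair $\bigl(\bigcup_{i=1}^kP_{m_i},\ \bigcup_{j=1}^\ell P_{n_j}\bigr)$ is sized by $(\m,\n)$, the implication (ii)$\,\Rightarrow\,$(iii) is immediate, and I would establish the remaining content as the three separate implications (i)$\,\Rightarrow\,$(iii), (iii)$\,\Rightarrow\,$(ii) and (iii)$\,\Rightarrow\,$(iv). Chaining the first two gives (i)$\,\Rightarrow\,$(ii), while (ii)$\,\Leftrightarrow\,$(iii) follows from (iii)$\,\Rightarrow\,$(ii) together with the trivial reverse implication. Throughout I use that a symmetric matrix in $S(G\vee H)$ has the block form $\begin{pmatrix}A&B\\B\trans&C\end{pmatrix}$ with $A\in S(G)$, $C\in S(H)$ and $B$ a nowhere-zero $|\m|\times|\n|$ matrix, and that after an affine shift the condition $q=2$ is equivalent to this matrix being a symmetric involution, that is $A^2+BB\trans=I$, $B\trans B+C^2=I$ and $AB+BC=0$.

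For (i)$\,\Rightarrow\,$(iii) I would construct such an involution explicitly for the two disjoint unions of paths, so that $A$ and $C$ are block-diagonal with irreducible tridiagonal blocks $T_i$ (of size $m_i$) and $U_j$ (of size $n_j$). The first step is to read the inequalities in (i) as feasibility conditions for an assignment of the $\ell$ path-components of $H$ among the $k$ path-components of $G$ in which each $G$-component receives at most $\psi=\lceil\ell/k\rceil$ of them; here $\m\ge(\psi^k)$ guarantees that every $G$-block is large enough to carry its assigned load, and the conjugate-partition sums $\sum_{j=1}^\psi n_j^*$ and the quantities $m_i^*$ count the eigenvalues that may be pinned to the extreme values. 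Given the assignment, I would build $B$ blockwise and then choose the diagonal and off-diagonal entries of each $T_i$ and $U_j$ so that $T_i^2$ and $U_j^2$ equal the corresponding diagonal blocks of $I-BB\trans$ and $I-B\trans B$; the two-sided bound on $|\m|$ is precisely what makes these prescribed squares realisable by simple-spectrum tridiagonal matrices.

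The heart of the equivalence is (iii)$\,\Rightarrow\,$(ii), which asserts that disjoint unions of paths are the \emph{hardest} graphs of given component sizes to orthogonalise in a join. The mechanism is spectral rigidity: an irreducible tridiagonal matrix has simple spectrum, so a path-component imposes the most restrictive constraint possible on its diagonal block, whereas any other connected graph on the same number of vertices admits matrices with repeated eigenvalues and therefore strictly more freedom. Accordingly I would prove a transfer lemma: if the path-union join is orthogonalisable, then for every pair $(G,H)$ sized by $(\m,\n)$ one can build an involution with $A\in S(G)$ and $C\in S(H)$ by re-realising each squared diagonal block inside the richer pattern of the corresponding component of $G$ or of $H$. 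The main obstacle is that a connected graph need not contain a spanning path, so one cannot merely restrict the path solution or add edges to it; instead the lemma must solve, for each component $F$ on $p$ vertices, a realisation problem $(A')^2=M$ with $A'\in S(F)$ for the prescribed positive semidefinite $M$, exploiting that the targets $M$ coming from simple-spectrum tridiagonal square roots form the tightest class and are hence realisable inside the more flexible $S(F)$. I expect this lemma to be the principal difficulty of the theorem.

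For (iii)$\,\Rightarrow\,$(iv) I would argue by necessity from a path-union involution. Writing $r=\rank B$, the identities $BB\trans=I-A^2$ and $B\trans B=I-C^2$ give $\dim\ker(I-A^2)=|\m|-r$ and $\dim\ker(I-C^2)=|\n|-r$, and these kernels are exactly the sums of the $\pm1$-eigenspaces of $A$ and of $C$. Since each tridiagonal block has simple spectrum it contributes at most one eigenvalue $+1$ and at most one eigenvalue $-1$, hence at most two to these dimensions; refining this count by block size relative to $\psi$ produces the two-sided bound $|\n|-n_2^*-n_3^*\le|\m|\le|\n|+m_{\psi+1}^*+m_{\psi+2}^*$ together with $\m\ge(\psi^k)$. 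For the isolated-vertex bound when $k<\ell$, I would use that each isolated vertex of $H$ yields a $1\times1$ diagonal block of $C$, which forces the corresponding column of $B$ to be orthogonal to every other column; bounding the number of such mutually orthogonal columns against the spectral capacity of the $k$ tridiagonal blocks of $A$ in two complementary ways yields the terms $(\psi-1)k$ and $\tfrac{\psi}{\psi-1}(\ell-k)$, and hence their maximum. A Schur--Horn majorisation constraint relating the diagonal of $I-BB\trans$ to its spectrum would be used to convert these multiplicity counts into the stated inequalities.
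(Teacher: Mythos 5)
Your skeleton of implications matches the paper's, but the mechanisms you propose for the two substantive implications would fail. The central gap is your ``transfer lemma'' for (iii)$\implies$(ii). Keeping $B$ fixed and re-realising each diagonal block square $(A'_i)^2=(I-BB\trans)_{ii}$ inside $S(F_i)$ is doubly flawed. First, the involution conditions are not functions of $A^2$ alone: $AB+BC=0$ is linear in $A$, so two matrices with the same square are not interchangeable (e.g.\ $A'=-A$ satisfies $(A')^2=A^2$ but $A'B+BC=2BC\ne 0$ in general). Second, the realisation problem $(A')^2=M$ with $A'\in S(F)$ is generally unsolvable: when $M$ has simple spectrum, any symmetric square root commutes with $M$ and hence has the form $Q\diag(\pm\sqrt{\mu_i})Q\trans$, a \emph{finite} set of at most $2^p$ matrices, and nothing forces any of them to have off-diagonal support exactly $E(F)$ --- a richer pattern does not create more square roots. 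The correct mechanism, which the paper does not reprove but quotes from \cite{levene2023distinct} (Remark~\ref{remark:SJO-K-AJO-P}, Theorem~\ref{thm:0-1-compatible->q=2}), works at the level of spectral data: $q=2$ for the path join forces compatible $0$-$1$ multiplicity matrices (irreducible tridiagonal blocks have simple spectra), and the Monfared--Shader nowhere-zero-eigenbasis theorem then realises any such pair of $0$-$1$ matrices in \emph{any} pair of graphs with the same component sizes, rebuilding $A$, $C$ \emph{and} $B$ from eigenvector data rather than holding $B$ fixed.

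The same construction-order problem undermines your (i)$\implies$(iii): one cannot choose $B$ blockwise first and then solve $T_i^2=$ prescribed block with $T_i$ irreducible tridiagonal, since the square of an irreducible tridiagonal matrix is a very particular pentadiagonal matrix, and the diagonal blocks of $I-BB\trans$ will not have this form for a $B$ built from an assignment of components. The paper goes the other way: condition (i) is shown to be equivalent to strong suitability (Proposition~\ref{prop:weakly-suitable-without-delta-epsilon}), from which compatible $0$-$1$ matrices $V\in\A(\bfr,\m)$, $W\in\A(\bfr,\n)$ are constructed via Gale--Ryser (Lemmas~\ref{lem:new1} and~\ref{lem:new2}, used in Theorem~\ref{thm:main}), and these are fed into the cited black box. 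Finally, your (iii)$\implies$(iv) sketch contains correct observations (a column of $B$ at an isolated vertex of $H$ is orthogonal to all other columns; each tridiagonal block contributes at most one eigenvalue $+1$ and one $-1$, which is indeed the source of the corrections $n_2^*+n_3^*$ and $m_{\psi+1}^*+m_{\psi+2}^*$), but the step that actually produces $\iota(\n)\le\max\bigl\{(\psi-1)k,\tfrac{\psi}{\psi-1}(\ell-k)\bigr\}$ --- ``bounding \dots against the spectral capacity \dots in two complementary ways'' --- is not an argument; the paper obtains it by an explicit count (Lemma~\ref{lem:neccesary} and Proposition~\ref{prop:necc2}) using Gale--Ryser majorisation, and that count is precisely where the two expressions inside the maximum, and the case split on whether $\iota$ exceeds $(\psi-1)k$, come from.
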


\begin{figure}[htb] 
 \tikzset{
    every node/.style={draw, circle, fill=white, inner sep=1pt}
    }
\begin{subfigure}{0.45\textwidth}  
  \centering
\begin{tikzpicture}[scale=2]
\foreach \x in {-1.5,-0.5,0.5,1.5}{
  \foreach \y in {-1.5,-0.5,0.5,1.5}{
  \draw[color=gray] (0,\x)--(2,\y); 
    }
    }
 \draw[thick] (2,0.5)--(2,1.5);
\foreach \x in {-1.5,0.5}{
            \draw[thick] (0,\x)--(0,\x+1);
             \node at (0,\x) { };
             \node at (0,\x+1) { };
             \node at (2,\x) { };
             \node at (2,\x+1) { };
    }
\end{tikzpicture}
    \caption{$2P_2 \vee (P_2\cup 2P_1)$}
     \label{fig:not-AJO1}
    \end{subfigure}
\begin{subfigure}{0.45\textwidth}  
  \centering
\begin{tikzpicture}[scale=2]
\foreach \x in {-0.5,0.5,1.5}{
  \foreach \y in {-1.5,-0.5,0.5,1.5}{
  \draw[color=gray] (0,\x)--(2,\y); 
    }
    }
\draw[thick] (2,0.5)--(2,1.5);
\draw[thick] (0,0.5)--(0,1.5);
\foreach \x in {-0.5,0.5,1.5}{
             \node at (0,\x) { };
             \node at (2,\x) { };
    }
\node at (2,-1.5) { };
\end{tikzpicture}
\caption{$(P_2 \cup P_1) \vee (P_2\cup 2P_1)$}  \label{fig:SJO2}
\end{subfigure}
\caption{The graph in (a) is sized by $(\m,\n)=((2^2),(2,1^2))$ which satisfies condition (i) in Theorem~\ref{thm:main:nec-suff-q=2}, hence it has $q=2$. The graph in (b) is sized by $(\m,\n)=((2,1),(2,1^2))$ which fails condition (iv) in Theorem~\ref{thm:main:nec-suff-q=2}, so this graph has $q>2$.}\label{fig:intro-q-2}
\end{figure}

This %
theorem %
will follow from Proposition~\ref{prop:weakly-suitable-without-delta-epsilon}, Theorem~\ref{thm:main}, Remark~\ref{remark:SJO-K-AJO-P} and Corollary~\ref{cor:paths-q>2}.

We will prove in Corollary~\ref{cor:k-divides-l-or-l+1} that when $k$ divides $\ell$ or $\ell+1$, all four statements in Theorem \ref{thm:main:nec-suff-q=2} are equivalent, but in general, (ii) does not imply (i) and (iv) does not imply (ii), see Proposition~\ref{prop:strict-inclusions}.
Example~\ref{example:k=3,l=4} shows that (ii) and (iv) are also equivalent  for $k=3$ and $\ell=4$.

\section{Compatible multiplicity matrices}

\label{subsec:compatible-multiplicity-matrices}
Let $G$ be a graph with $k$ connected components $G_1,\dots,G_k$. A matrix $V$ with non-negative integer entries \emph{fits} $G$ if $V$ has $k$ columns and the $i$th column of $V$ has sum $|G_i|$ for $1\le i\le k$.
We say that $V$ is a \emph{multiplicity matrix} for $G$ if there is a matrix $A\in S(G)$ and real numbers $\lambda_1<\dots<\lambda_r$, where $r$ is the number of rows of $V$, such that each matrix entry $v_{i,j}$ is the multiplicity of $\lambda_i$ as an eigenvalue of $A_j$, the submatrix of $A$ corresponding to the connected component $G_j$ of $G$. In particular, this implies that the $j$th column of $G$ is an ordered multiplicity list of some matrix in $S(G_j)$.
(The zero entries in the matrix correspond to multiplicity zero, and are ignored in the corresponding ordered multiplicity list of $A_j$.)
By a $0$-$1$ matrix, we mean a matrix whose entries are all in $\{0,1\}$. Monfared and Shader,~\cite[Corollary~4.3]{MonfaredShader2013-interlacing} and~\cite[Theorem~4.3]{MonfaredShader2016-Nowhere-zero-eigenbasis}, showed  that any $0$-$1$ matrix which fits a graph $G$ is a multiplicity matrix for $G$.

In \cite{levene2020orthogonal} the authors defined the notion of compatible multiplicity matrices and proved that the existence of compatible multiplicity matrices that fit graphs $G$ and $H$ is necessary for $q(G\vee H)=2$. Given a matrix $X$ with at least $3$ rows, we denote by $\widetilde X$ the matrix obtained by deleting the first and last rows of~$X$. 
We say that matrices $V \in \bN_0^{r \times k}$ and $W \in \bN_0^{r \times \ell}$ are \emph{compatible} if $r\ge3$ and
  $$\widetilde V\1_k=\widetilde W\1_\ell \; \text{ and } \; \widetilde V\trans \widetilde W \in \bN^{k \times \ell}.$$
  In other words, the row-sums of $\widetilde{V}$ and $\widetilde{W}$ agree, and no column of $\widetilde{V}$ is orthogonal to a column of $\widetilde{W}$.

\begin{theorem}[{\cite[Corollaries~3.2 and~3.3]{levene2023distinct}}]\label{thm:0-1-compatible->q=2}
 If there exist compatible $0$-$1$ matrices that fit graphs $G$ and $H$, then $q(G\vee H)=2$.
 Moreover, if $G$ and $H$ are unions of paths, then the converse holds.
\end{theorem}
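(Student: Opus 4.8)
The plan is to prove the two implications separately, treating the forward direction (compatible $0$–$1$ matrices $\Rightarrow q=2$) as the substantial one and deducing the converse for unions of paths from the necessary condition recalled above. For the forward direction, suppose $V\in\bN_0^{r\times k}$ fits $G$ and $W\in\bN_0^{r\times \ell}$ fits $H$ are compatible $0$–$1$ matrices. The target is a matrix $A\in S(G\vee H)$ with exactly two eigenvalues, which after a harmless diagonal shift and scaling (both of which preserve $S(G\vee H)$) we may take to be $\pm1$; equivalently we seek an orthogonal, symmetric $A=\begin{pmatrix}A_G&B\\B\trans&A_H\end{pmatrix}$ with $A_G\in S(G)$, $A_H\in S(H)$ and $B$ entrywise nonzero. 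Writing $A^2=I$ block by block reduces this to the three equations $A_G^2+BB\trans=I$, $A_H^2+B\trans B=I$ and $A_GB+BA_H=0$. First I would invoke the strengthened Monfared--Shader realisation \cite{MonfaredShader2016-Nowhere-zero-eigenbasis} to produce $A_G\in S(G)$ and $A_H\in S(H)$ realising $V$ and $W$ with prescribed eigenvalues $-1=\lambda_1<\lambda_2<\dots<\lambda_{r-1}<\lambda_r=1$ chosen symmetric about $0$ (after re-indexing the eigenvalues of $A_H$ if necessary), and with nowhere-zero eigenvectors on each component. The extreme rows of $V$ and $W$, corresponding to the eigenvalues $\pm1$, contribute eigenvectors that will remain uncoupled eigenvectors of $A$, which is exactly why the compatibility condition constrains only $\widetilde V$ and $\widetilde W$.

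The core of the construction is the off-diagonal block $B$. Passing to the eigenbases of $A_G$ and $A_H$, the equation $A_GB+BA_H=0$ forces the coupling to pair an interior eigenvalue $\lambda$ of $A_G$ with its reflection $-\lambda$ of $A_H$, while the two norm equations force the corresponding coupling block to equal $\sqrt{1-\lambda^2}$ times a square orthogonal matrix. Here both halves of the compatibility condition enter. The row-sum equality $\widetilde V\1_k=\widetilde W\1_\ell$ guarantees that, for each interior eigenvalue, the relevant $G$-eigenspace and the matching $H$-eigenspace have equal dimension, so that a square orthogonal coupling exists; and the positivity $\widetilde V\trans\widetilde W\in\bN^{k\times\ell}$ guarantees that every pair of components $(G_i,H_j)$ shares at least one (reflected) interior eigenvalue, so that the corresponding block of $B$ is not forced to vanish. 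Assembling these coupling blocks, with zero coupling on the $\pm1$-eigenspaces, yields a symmetric $A$ that splits into $2\times2$ blocks of eigenvalues $\pm1$ together with $1\times1$ uncoupled blocks of eigenvalue $\pm1$, so that $q(A)=2$.

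The main obstacle is the remaining requirement that $A$ genuinely lie in $S(G\vee H)$, i.e.\ that $B$ be entrywise nonzero: this is where the rigidity of the coupling (each block must be exactly a scaled orthogonal matrix) collides with the need for full support. I would resolve it by exploiting the freedom in the choice of the orthogonal coupling matrices. Because the component eigenvectors are nowhere zero, each shared interior eigenvalue contributes a rank-one, entrywise-nonzero term to each block $B^{(i,j)}$ of $B$, provided the relevant entry of the orthogonal coupling matrix is nonzero; choosing each coupling matrix generically (so that all of its entries are nonzero) makes every entry of every $B^{(i,j)}$ nonzero while avoiding the finitely many cancellation loci that can arise when several eigenvalues contribute to the same block.

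Finally, for the converse assume $G$ and $H$ are unions of paths and $q(G\vee H)=2$. By the necessary condition recalled above \cite{levene2020orthogonal}, there exist compatible multiplicity matrices fitting $G$ and $H$. Since every matrix in $S(P_n)$ is tridiagonal with nonzero off-diagonal entries (a Jacobi matrix), it has simple spectrum, so every column of a multiplicity matrix of a union of paths is a $0$–$1$ vector. Hence the compatible multiplicity matrices just produced are automatically compatible $0$–$1$ matrices, completing the proof.
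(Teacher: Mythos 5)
First, note that the paper does not actually prove this theorem: it is imported verbatim from \cite[Corollaries~3.2 and~3.3]{levene2023distinct}, so any blind proof is necessarily a ``different route'' from the text. Your converse direction is correct and is the standard argument: necessity of compatible multiplicity matrices for $q(G\vee H)=2$ is the result of \cite{levene2020orthogonal} recalled in Section~2, and every matrix in $S(P_n)$ is a Jacobi matrix with simple spectrum, so any multiplicity matrix of a union of paths is automatically a $0$-$1$ matrix. The skeleton of your forward direction is also the correct and standard one: the three block equations, Monfared--Shader realisation of $V$ and $W$ with reflected spectra and nowhere-zero eigenvectors, and coupling by scaled orthogonal maps between $E_{A_G}(\lambda)$ and $E_{A_H}(-\lambda)$, with the row-sum condition giving squareness of each coupling and the positivity condition giving at least one shared eigenvalue per component pair.

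The gap is in your final step, where you claim $B$ can be made entrywise nonzero ``by choosing each coupling matrix generically''. The coupling matrix $Q_i$ attached to the interior eigenvalue $\lambda_i$ has size $d_i\times d_i$, where $d_i$ is the $i$th interior row sum of $V$; when $d_i=1$ it is just a sign $\pm1$. If every eigenvalue contributing to some block $B^{(a,b)}$ has $d_i=1$, your parameter space is \emph{finite}, genericity is vacuous, and all choices can fail. Concretely, take $G=H=P_2$ and $V=W=(0,1,1,0)\trans$, which are compatible and fit $G,H$. The Monfared--Shader theorem, used as a black box, may legitimately return $A_G=A_H=\begin{pmatrix}0&a\\a&0\end{pmatrix}$ (distinct eigenvalues $\pm a$, nowhere-zero eigenvectors). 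Then any $B$ satisfying $A_GB+BA_H=0$ and $BB\trans=(1-a^2)I$ equals $\sqrt{1-a^2}\,Q$ with $Q$ orthogonal and anticommuting with $\begin{pmatrix}0&1\\1&0\end{pmatrix}$, and the only such $Q$ are $\pm\begin{pmatrix}1&0\\0&-1\end{pmatrix}$ and $\pm\begin{pmatrix}0&1\\-1&0\end{pmatrix}$: every admissible coupling has zero entries, even though of course $q(P_2\vee P_2)=q(K_4)=2$. So the obstruction cannot be removed by the choice of couplings at all; the realisations themselves must be chosen well. The missing idea is genericity (or at least controlled freedom) in the \emph{eigenvector data} of $A_G$ and $A_H$ --- for instance via the stronger structured-inverse-eigenvalue forms of the Monfared--Shader machinery, which realise open sets of eigenvector data rather than a single realisation --- so that the entries of $B$ become nonconstant functions of continuously varying parameters whose zero sets can then be avoided. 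That is precisely the technical heart of the cited proof in \cite{levene2023distinct}, and it is the point your argument elides.
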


  For $r,s\in \bN$ and $\bfr\in \bN_0^r$, $\mathbf{s}\in \bN_0^s$,
  we use the standard notation $\A(\bfr,\mathbf{s})$ for the set of $r\times s$ matrices $X\in \{0,1\}^{r\times s}$ such that $\mathbf{r}$ and $\mathbf{s}$ are the row-sum and the column-sum of $X$, respectively. That is, $X\mathbf{1}_{s}=\mathbf{r}$ and $X\trans\mathbf{1}_r=\mathbf{s}$.
\begin{definition}
  Let $k,\ell\in \bN$. We define sets
\begin{align*}
    \LL(k,\ell):=\{(\m,\n)\in {\bN^k} \times {\bN^{\ell}} & \colon   \text{there exist  $s\in \bN$, $\bfr_V,\bfr_W\in \bN_0^{s+2}$ and}   \\
     &\text{\;\;\;compatible $V\in {\mathcal A}(\bfr_V,\m)$ and  $W\in{\mathcal A}(\bfr_W,\n)$}
       \}
\end{align*}
and
\begin{align*}
\widetilde{\LL}(k,\ell):=\{(\m,\n)\in {\bN^k} \times {\bN^{\ell}} \colon  &\text{there exist } s\in\bN,\; \bfr\in \bN_0^s \text{ and}\\
  &V\in {\mathcal A}(\bfr,\m), \;
  W\in{\mathcal A}(\bfr,\n)
  \text{ with } V\trans W\in \bN^{k \times \ell}\}.
\end{align*}
\end{definition}

\begin{remark}\label{remark:D-sets-and-AJO}\begin{enumerate}[(a)]
\item By symmetry, we may restrict attention to these sets in the case $k\le \ell$.
\item \label{remark:D-vs-tilde(D)} We have $(\m,\n)\in \LL(k,\ell)$ if and only if there exist $\bdelta\in \{0,1,2\}^k$ and $\bepsilon\in \{0,1,2\}^{\ell}$ such that $(\m-\bdelta,\n-\bepsilon)\in \widetilde{\LL}(k,\ell)$. Indeed, if $(\m-\bdelta,\n-\bepsilon)\in \widetilde{\LL}(k,\ell)$, with corresponding matrices $V,W$, then we can write $\bdelta$ as the sum of two $0$-$1$ vectors and extend $V$ by appending these as a new first and final row, and similarly extend $W$ using $\bepsilon$. These extended matrices show that $(\m,\n)\in \LL(k,\ell)$. Conversely, given compatible $V,W$ showing that $(\m,\n)\in \LL(k,\ell)$, let $\bdelta$ and  $\bepsilon$ be the sum of the first and final rows of $V$ and $W$, respectively. By compatibility, we have $\bfr:=\widetilde{\bfr_V}=\widetilde{\bfr_W}$, and since $\m':=\m-\bdelta$ and $\n':=\n-\bepsilon$ are the column sums of $\widetilde V$ and $\widetilde W$, respectively, by compatibility $\m'$ and $\n'$ have no zero entries, so $(\m',\n')\in \bN^k\times \bN^\ell$.
Moreover,  $\widetilde V\in \A(\bfr,\m')$ and $\widetilde W\in \A(\bfr,\n')$, so $(\m',\n')\in \widetilde{\LL}(k,\ell)$.
\item \label{remark:D-implies-same-size} If $(\m,\n)\in \widetilde{\LL}(k,\ell)$, then $|\m|=|\n|$. Indeed, if $V\in \mathcal{A}(\mathbf{r},\m)$ and $W\in \mathcal{A}(\mathbf{r},\n)$, then the sums of all entries of $V$ are given by $|\mathbf{r}|=|\m|$, and similarly for $W\in \mathcal{A}(\mathbf{r},\n)$, so this is immediate from the definition above. The same conclusion does not necessarily hold for $(\m,\n)\in \LL(k,\ell)$; for example, if $(\m,\n)=((3,2),(2,2))$, then $(\m,\n)\in \LL(2,2)$ and $|\m|\ne |\n|$.
\end{enumerate}
\end{remark}

The following results give some sufficient conditions and some necessary conditions for membership in $\widetilde{\LL}(k,\ell)$ that are motivated by  Theorem \ref{thm:0-1-compatible->q=2}, and are derived by considering the existence of compatible $0$-$1$ matrices. 

\begin{lemma}\label{lem:new1}
   Let $1\le k\le \ell$ and $s\ge1$.
  If $(\m,\n)\in \widetilde{\LL}(k,\ell)$,
  then
  for any $\mathbf{p}\in \bN_0^k$, $\mathbf{q}\in \bN_0^\ell$ with $|\bfp|=|\bfq|$, we have $(\m+\mathbf{p},\n+\mathbf{q})\in \widetilde{\LL}(k,\ell)$.
\end{lemma}
\begin{proof}
  Let $(\m,\n)\in \widetilde{\LL}(k,\ell)$, so that there exist $s\in\bN$, $\bfr\in \bN_0^s$, $V\in {\mathcal A}(\bfr,\m)$ and $W\in{\mathcal A}(\bfr,\n)$ with $V\trans W\in \bN^{k \times \ell}$. If $|\bfp|=|\bfq|=1$, then
  let $X$ be $V$ with the extra row $\bfp$ appended, and let $Y$ be $W$ with the extra row $\bfq$ appended. Then $X\in \A((\bfr,1),\m+\bfp)$, $Y\in \A((\bfr,1),\n+\bfq)$ and $X\trans Y\ge V\trans W$
  so $X\trans Y\in \bN^{k\times \ell}$. Hence $(\m+\bfp,\n+\bfq)\in \widetilde{\LL}(k,\ell)$.
  If $|\bfp|=|\bfq|>1$, then we can decompose $\bfp$ and $\bfq$ as sums of standard basis vectors, pair them up in an arbitrary fashion and repeat the above argument $|\bfp|=|\bfq|$ times.
\end{proof}

\begin{lemma}\label{lem:new2}
  Let $1\le k\le \ell$ and $s\ge1$.
  If $\bfc\in \bN^\ell$ with $\bfc\le (s^\ell)$ and $|\bfc|=ks$,
  then %
  $((s^k),\bfc)\in \widetilde{\LL}(k,\ell)$.
\end{lemma}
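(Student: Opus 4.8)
The plan is to exhibit explicit $0$-$1$ matrices $V$ and $W$ witnessing membership, using $s$ rows throughout (so that $\bfr\in\bN_0^s$ as required by the definition of $\widetilde{\LL}(k,\ell)$). I would take $V\in\{0,1\}^{s\times k}$ to be the all-ones matrix. Each of its columns then sums to $s$, so its column-sum vector is $(s^k)=\m$, while every one of its row sums equals $k$; thus with $\bfr:=(k^s)$ we have $V\in\A((k^s),(s^k))$. The guiding observation is that choosing $V$ all-ones makes the condition $V\trans W\in\bN^{k\times\ell}$ essentially free: for any $W$ with $s$ rows one computes $(V\trans W)_{ij}=\sum_r V_{ri}W_{rj}=\sum_r W_{rj}$, which is simply the $j$th column sum of $W$. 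Hence the whole problem collapses to producing a single $0$-$1$ matrix $W$.

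So it remains only to find $W\in\{0,1\}^{s\times\ell}$ with the same row-sum vector $\bfr=(k^s)$ as $V$ and with column-sum vector $\bfc$; any such $W$ lies in $\A((k^s),\bfc)$, and then $(V\trans W)_{ij}=c_j\ge 1$ because $\bfc\in\bN^\ell$, giving $V\trans W\in\bN^{k\times\ell}$ and therefore $((s^k),\bfc)\in\widetilde{\LL}(k,\ell)$. The existence of $W$ is a bipartite degree-sequence question, which I would settle via the Gale--Ryser theorem: a $0$-$1$ matrix with row sums $(k^s)$ and column sums $\bfc$ exists if and only if $\bfc\preceq (k^s)^*$, and the conjugate of $(k^s)$ is exactly $(s^k)$.

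The hypotheses deliver precisely this majorisation. First, $|\bfc|=ks=|(s^k)|$. Second, for each $p$ the sum of the $p$ largest entries of $\bfc$ is at most $\min\{p,k\}s=\sum_{i=1}^p (s^k)_i$: for $p\le k$ this uses $\bfc\le(s^\ell)$ (each entry is at most $s$), and for $p>k$ it uses $|\bfc|=ks$. Hence $\bfc\preceq(s^k)=(k^s)^*$, so $W$ exists; permuting its columns recovers the prescribed order of $\bfc$. The only genuine content of the argument is this existence step, the rest being bookkeeping around the all-ones choice of $V$.

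If a self-contained construction is preferred over citing Gale--Ryser, I would instead fill an $s\times\ell$ grid cyclically: run a pointer $t$ from $0$ through $ks-1$, give column $j$ its ones in the $c_j$ rows $t,t+1,\dots,t+c_j-1$ taken modulo $s$, then advance $t$ by $c_j$. Since $c_j\le s$, these $c_j$ rows are distinct within column $j$ (a cyclic interval of length at most $s$ has no repeats), so no cell is used twice; and since the $ks$ positions sweep each residue modulo $s$ exactly $k$ times, every row sum equals $k$ while column $j$ receives exactly $c_j$ ones. This produces the desired $W$ directly. I expect the main---though routine---point to verify in this variant to be the no-collision claim, which is exactly where the hypothesis $\bfc\le(s^\ell)$ is used.
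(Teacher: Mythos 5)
Your proof is correct and follows essentially the same route as the paper: take $V$ to be the all-ones $s\times k$ matrix, note that every entry of $V\trans W$ is then a column sum $c_j\ge 1$ of $W$, and obtain $W\in\A((k^s),\bfc)$ from the Gale--Ryser theorem after verifying $\bfc\preceq (s^k)=(k^s)^*$ exactly as you do. Your optional cyclic-filling construction of $W$ is also valid and would make the argument self-contained, but the main line of reasoning coincides with the paper's.
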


\begin{proof}
 Since $\bfc\le (s^\ell)$ and $|\bfc|=ks=|(s^k)|$, the decreasing rearrangement of $\bfc$ is majorised by $(s^k)$, which has conjugate partition $(s^k)^*=(k^s)$.
  By the Gale-Ryser theorem~\cite{MR91855, MR0150048} (see also~\cite[Theorem~2.1.3]{BrualdiCMC2006}), there is an $s\times \ell$ matrix $W\in \A((k^s),\bfc)$. Let $V$ be the $s\times k$ matrix with every entry equal to $1$. Then $V\in \A((k^s),(s^k))$ and
  every entry of $V\trans W$ is an entry of $\bfc\in \bN^\ell$, so $((s^k),\bfc)\in \widetilde{\LL}(k,\ell)$.
\end{proof}

\begin{lemma}\label{lem:necessary-condition-on-m}
  If $k \leq \ell$ are positive integers and  $(\m,\n) \in \widetilde{\LL}(k,\ell)$, then $\m \geq (\psi^k)$
    where $\psi=\left\lceil\ell/k\right\rceil$. %
  \end{lemma}
  \begin{proof}
    Without loss of generality, suppose that the entries of $\m$ are arranged in decreasing order. We must show that $m_k\ge \psi$, or equivalently, $km_k\ge \ell$. There exist $\bfr$ and matrices $V\in{\mathcal A}(\bfr,\m)$, $W\in{\mathcal A}(\bfr,\n)$ such that $V\trans W$ is nowhere zero. By permuting the rows of $V$ and $W$ if necessary, we may assume that the $k$th column of $V$ is $(1^{m_k},0,0,\dots)\trans$.
    Since $V$ and $W$ are compatible, each of the $\ell$ columns of $W$ must contain at least one $1$ in the first $m_k$ rows; for otherwise, $V\trans W$ would have a zero entry. On the other hand, each row-sum $r_i$ of $W$ is equal to the corresponding row-sum of $V$, which  is at most $k$. Therefore
    \[\ell \leq \sum_{i\in [m_k]}r_i\leq km_k.\qedhere\]
\end{proof}

We now turn to a necessary
condition  for membership in $\widetilde{\LL}(k,\ell)$ which involves the number of isolated vertices (see
Proposition~\ref{prop:necc2}). For natural numbers $k<\ell$, let
\[
  \iota_{\max}(k,\ell):=
  \max\left\{(\psi-1)k,\frac{\psi}{\psi-1}(\ell-k)\right\}
\]
where $\psi=\lceil \ell/k\rceil$.

\begin{lemma}\label{lem:neccesary}
  Suppose $k< \ell$ and $s\in \bN$. Let $\bfc\in\bN_0^\ell$.
  If $\A((k^s),\bfc)\ne \emptyset$, then
  $\iota(\bfc)\le \iota_{\max}(k,\ell)$.
\end{lemma}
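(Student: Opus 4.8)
The plan is to argue directly from a single matrix witnessing $\A((k^s),\bfc)\neq\emptyset$, bypassing the Gale--Ryser theorem entirely: since only the forward implication is needed, one explicit $X\in\A((k^s),\bfc)$ suffices. Counting the ones of $X$ in two ways gives $|\bfc|=\sum_j c_j = sk$, and because $X$ has only $s$ rows, every column sum obeys $c_j\le s$. Writing $a:=\iota(\bfc)$ for the number of columns equal to $1$, these two facts are the only structural input I will use. Note also that $k<\ell$ forces $\psi=\lceil \ell/k\rceil\ge 2$, which will matter below.

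First I would record two upper bounds on $a$. The crude one is immediate: the $a$ singleton columns each contribute $1$ to the total, so $a\le |\bfc| = sk$. For the sharper bound, I split the $\ell$ columns into the $a$ singletons, the $b$ columns with $c_j\ge 2$, and the columns with $c_j=0$; these classes are disjoint, so $b\le \ell-a$, and each heavy column satisfies $c_j\le s$. Hence
\[
  sk = |\bfc| = a + \sum_{j:\,c_j\ge 2} c_j \le a + bs \le a + (\ell-a)s,
\]
and rearranging (valid for $s\ge 2$, where $s-1>0$) yields $a\le \frac{s}{s-1}(\ell-k)$.

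The final step is a case analysis on $s$, matching the two terms of $\iota_{\max}(k,\ell)$. If $s\le \psi-1$, the crude bound gives $a\le sk\le (\psi-1)k\le \iota_{\max}(k,\ell)$. If instead $s\ge \psi$ (so in particular $s\ge 2$), then since $\ell-k>0$ and $t\mapsto \frac{t}{t-1}$ is decreasing on $(1,\infty)$, the sharper bound gives $a\le \frac{s}{s-1}(\ell-k)\le \frac{\psi}{\psi-1}(\ell-k)\le \iota_{\max}(k,\ell)$. In both regimes $\iota(\bfc)=a\le \iota_{\max}(k,\ell)$, as required.

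I do not expect a genuine obstacle: the only real content is discovering the inequality $a\le \frac{s}{s-1}(\ell-k)$ and recognising that the two expressions in $\iota_{\max}(k,\ell)$ are precisely the worst cases of the crude and sharp bounds in the two regimes $s\le \psi-1$ and $s\ge \psi$. The one point to check carefully is the small-$s$ boundary: the case $s=1$ (and more generally $s\le\psi-1$) lands cleanly in the first regime exactly because $\psi\ge 2$, and the value $s=\psi$ must be assigned to the second regime so that the monotonicity of $\frac{t}{t-1}$ can be applied.
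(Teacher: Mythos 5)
Your proof is correct. It rests on exactly the same double count as the paper's: writing $a=\iota(\bfc)$, both arguments combine the crude bound $a\le|\bfc|=sk$ with the inequality $sk\le a+(\ell-a)s$, and your rearranged form $a\le\frac{s}{s-1}(\ell-k)$ is literally the paper's inequality $s\le\frac{a}{a+k-\ell}$ solved for $a$ instead of for $s$. The differences are in the scaffolding, and both favour your version. First, you extract the two structural facts ($|\bfc|=sk$ and $c_j\le s$ for all $j$) directly from a witness matrix $X\in\A((k^s),\bfc)$, whereas the paper invokes the Gale--Ryser theorem to get them via majorisation by $(s^k)$; since only the forward implication is needed here, Gale--Ryser is stronger than necessary, and your direct counting is the more elementary and self-contained route (the paper's choice is natural only because it already needs Gale--Ryser in Lemma~\ref{lem:new2}). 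Second, your case analysis runs on $s$ (the regimes $s\le\psi-1$ and $s\ge\psi$, with monotonicity of $t\mapsto t/(t-1)$ handling the second), while the paper argues contrapositively on $\iota(\bfc)$: assuming $\iota(\bfc)>(\psi-1)k$, it bounds $s$ from both sides, uses integrality of $s$ to force $s\ge\psi$, and substitutes back. Note that your dichotomy $s\le\psi-1$ or $s\ge\psi$ uses the integrality of $s$ at precisely the point where the paper does, so the two proofs are logically equivalent; yours is arguably the more transparent presentation, and you were right to flag the boundary cases ($s=1$ needing $\psi\ge2$, and $s=\psi$ belonging to the second regime) as the only delicate points.
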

\begin{proof}
  Let $\psi=\lceil \ell/k\rceil$. Under the given hypotheses, we will show that if $\iota(\bfc)>(\psi-1)k$
  then we have
  $\iota(\bfc)\le \tfrac{\psi}{\psi-1}(\ell-k)$.
  Since $\A((k^s),\bfc)\ne \emptyset$, by
  the Gale-Ryser theorem, $\bfc$ is majorised by $(k^s)^*=(s^k)$,
  which implies that $|\bfc|=ks$ and no entry of $\bfc$
  exceeds~$s$. Hence
  \[ \iota(\bfc)\le|\bfc|=ks\le (\ell-\iota(\bfc))s+\iota(\bfc).\]
  Rearranging and using $\iota(\bfc)>(\psi-1)k\ge \ell-k$, which implies that $\iota(\bfc)+k-\ell>0$, we obtain
  \[ \frac{\iota(\bfc)}k\le s\le
    \frac{\iota(\bfc)}{\iota(\bfc)+k-\ell}.\] Since
  $(\psi-1)k<\iota(\bfc)$ and $s$ is an integer, the left-hand
  inequality yields $s\ge \psi$. The right-hand inequality now implies
  that $(\iota(\bfc)+k-\ell)\psi\le \iota(\bfc)$ and hence
  $\iota(\bfc)\le \tfrac{\psi}{\psi-1}(\ell-k)$.
\end{proof}

\begin{proposition}\label{prop:necc2}
  If $k <\ell$ and $(\m,\n)\in \widetilde{\LL}(k,\ell)$,
  then $\iota(\n)\le \iota_{\max}(k,\ell)$.
\end{proposition}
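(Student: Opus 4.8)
The plan is to deduce this from Lemma~\ref{lem:neccesary} by extracting, from the matrices witnessing membership in $\widetilde{\LL}(k,\ell)$, a submatrix of $W$ whose row-sums are all equal to $k$. Unpacking the definition, since $(\m,\n)\in \widetilde{\LL}(k,\ell)$ there are $s\in\bN$, a common row-sum vector $\bfr\in\bN_0^s$, and $0$-$1$ matrices $V\in\A(\bfr,\m)$, $W\in\A(\bfr,\n)$ with $V\trans W$ nowhere zero. Here $V$ has $k$ columns, so every row-sum satisfies $r_i\le k$, with equality exactly when the $i$th row of $V$ is the all-ones vector; I would call such rows \emph{full}, and let $F$ denote the set of their indices and $f=|F|$.

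The key observation concerns the isolated vertices, i.e.\ the columns $j$ of $W$ with $n_j=1$. Such a column of $W$ contains a single $1$, say in row $i_j$. Computing the $(a,j)$ entry of $V\trans W$ for each $a\in[k]$ shows it equals $V_{i_j,a}$; since $V\trans W$ is nowhere zero, the whole row $i_j$ of $V$ must be all ones, that is, $i_j\in F$. Thus every isolated vertex of $H$ has its unique $1$ located in a full row. In particular, if $F=\emptyset$ then there are no isolated vertices and $\iota(\n)=0\le \iota_{\max}(k,\ell)$ (using that $\psi\ge2$ here, so $\iota_{\max}$ is well defined and nonnegative), and so we may assume $f\ge1$.

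Finally I would restrict $W$ to the rows indexed by $F$, obtaining an $f\times\ell$ matrix $W_F$. Because $V$ and $W$ share the row-sum vector $\bfr$ and each $i\in F$ has $r_i=k$, the matrix $W_F$ has every row-sum equal to $k$; hence $W_F\in\A((k^f),\bfc)$, where $\bfc\in\bN_0^\ell$ is its vector of column-sums. For each isolated vertex $j$, the single $1$ of column $j$ lies in $F$, so $c_j=1$, whence $\iota(\bfc)\ge \iota(\n)$. Since $\A((k^f),\bfc)\ne\emptyset$ and $k<\ell$, Lemma~\ref{lem:neccesary} (applied with $s=f$) gives $\iota(\bfc)\le \iota_{\max}(k,\ell)$, and the chain $\iota(\n)\le\iota(\bfc)\le\iota_{\max}(k,\ell)$ finishes the argument. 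The step I expect to be the crux is the localisation of isolated vertices to full rows of $V$: once the nowhere-zero condition is seen to force exactly this, the constant row-sum $(k^f)$ needed to invoke Lemma~\ref{lem:neccesary} appears automatically.
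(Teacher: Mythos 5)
Your proof is correct and follows essentially the same route as the paper: both arguments observe that the nowhere-zero condition on $V\trans W$ forces each column of $W$ corresponding to an isolated vertex to have its single $1$ in an all-ones row of $V$, then restrict $W$ to such rows to obtain a matrix in $\A((k^s),\bfc)$ with $\iota(\bfc)\ge\iota(\n)$, and conclude via Lemma~\ref{lem:neccesary}. The only (immaterial) difference is that the paper keeps precisely the rows meeting the isolated-vertex $1$s, while you keep all full rows of $V$.
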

\begin{proof}
  We may assume that $\iota(\n)>0$ and $\n=(\n',1^{\iota(\n)})$ where
  $\n'\in \bN^{\ell-\iota(\n)}$. There exist $h\in \bN$,
  $\bfr\in \bN_0^h$ and matrices $V\in \A(\bfr,\m)$,
  $W\in \A(\bfr,\n)$ such that $V\trans W\in \bN^{k\times
    \ell}$. Write $W=(W'\;W'')$ where $W''$ has $\iota(\n)$
  columns. Then $W''$ has a single non-zero entry in each column,
  which together cover $s$ rows, say, where $s\in [h]$. By permuting
  the rows of $V$ and $W$, we may assume that the non-zero entries of
  $W''$ occur precisely in rows $1,\dots,s$. Then we can partition $W$
  as $W=\begin{pmatrix}
    W_1\\
    W_2
  \end{pmatrix}$, where $W_1$ is an $s\times \ell$ 0-1 matrix whose
  last $\iota(\n)$ columns include the vectors
  $\mathbf{e}_1,\dots,\mathbf{e}_s$. Since $V\trans W$ has no zero
  entries, it follows that
  $V=\begin{pmatrix}\mathbf{1}_s\mathbf{1}_k\trans\\V_2\end{pmatrix}$ for some
  matrix $V_2$, hence the row sums of $W_1$ are $(k^s)$. So
  $W_1\in \A((k^s),\bfc)$ for some $\bfc\in \bN_0^{\ell}$. The last $\iota(\n)$ column-sums of $W_1$ are all $1$, by construction, so
  $\iota(\bfc)\ge \iota(\n)$. By
  Lemma~\ref{lem:neccesary}, we have
  $\iota(\n)\le \iota(\bfc)\le \iota_{\max}(k,\ell)$.
\end{proof}

\section{Join-orthogonalisability, strong and weak suitability}\label{sec:suitability}

In this section, we establish one of our main results, Theorem~\ref{thm:main},
which relates two simple combinatorial conditions to two properties
related to the orthogonalisability of joins of graphs. We start by
defining and discussing these orthogonalisability properties.

\begin{definition}
  Let  $(\m,\n)\in \bN^k\times \bN^\ell$. We say that the pair $(\m,\n)$ is \emph{always join-orthogonalisable} if for all pairs of graphs $(G,H)$ that are sized by $(\m,\n)$, we have $q(G\vee H)=2$.

  We say that $(\m,\n)$ is \emph{sometimes join-orthogonalisable} if there is some pair of graphs $(G,H)$ that is sized by $(\m,\n)$, so that $q(G\vee H)=2$.
\end{definition}

\begin{remark}\label{remark:D-AJO} By Theorem~\ref{thm:0-1-compatible->q=2}, %
  the set of always join-orthogonalisable pairs in $\bN^k\times\bN^\ell$ coincides with $\LL(k,\ell)$. In principle, this gives a combinatorial characterisation of always join-orthogonalisability in terms of the existence or non-existence of certain pairs of $0$-$1$ compatible multiplicity matrices. However, determining this existence or non-existence seems a difficult combinatorial problem, and our motivation for the current work was to find arithmetic conditions on a given pair $(\m,\n)$ which make this simpler to check.
\end{remark}

\begin{remark}\label{remark:SJO-K-AJO-P}
  By \cite[Theorem~3.1 and Corollary~3.3]{levene2023distinct}, a pair $(\m,\n)\in \bN^k\times \bN^\ell$ is always join-orthogonalisable if and only if
$$q\left(\bigcup_{i\in [k]}P_{m_i}\vee \bigcup_{j\in [\ell]}P_{n_j}\right)= 2.$$
Recall \cite[Theorem~3.1]{2013-Barrett-complete-graphs} that any realisable eigenvalue list of a connected graph $G$ of order $n$ is also realisable for the complete graph $K_n$ of the same order. It follows from \cite[Theorem~3.1 and Corollary~3.2]{levene2023distinct} that $(\m,\n)\in \bN^k\times \bN^\ell$ is sometimes join-orthogonalisable if and only if
$$q\left(\bigcup_{i\in [k]}K_{m_i}\vee \bigcup_{j\in [\ell]}K_{n_j}\right)= 2.$$
\end{remark}

In light of this, we can state \cite[Theorem~4.10]{levene2020orthogonal} as the following characterisation of sometimes join-orthogonalisability.
\begin{theorem}[\cite{levene2020orthogonal}]\label{thm:join-union-complete-graphs-2}
  Let $k\le \ell$, $\m=(m_i)\in \bN^k$ and $\n=(n_j)\in\bN^\ell$. The pair $(\m,\n)$ is sometimes join-orthogonalisable if and only if one of the following is true:
\begin{enumerate}[(i)]
\item $\iso(\m)=0$ and $\iso(\n)=0$ and $\ell\le|\m|$;
\item $\iso(\m)\ne 0$ and $k+\ell\le |\m|+\iso(\m)$;
\item $\iso(\m)=0$ and $\iso(\n)\ne 0$ and either $k+\ell\le|\m|$, or $2k\le \ell \le |\m|$, or $\ell\le 2k\le |\n|$.
\end{enumerate}
\end{theorem}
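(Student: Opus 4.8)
The plan is to deduce this from the complete-graph reformulation already recorded in Remark~\ref{remark:SJO-K-AJO-P}. By that remark, $(\m,\n)$ is sometimes join-orthogonalisable if and only if $q\left(\bigcup_{i\in[k]}K_{m_i}\vee\bigcup_{j\in[\ell]}K_{n_j}\right)=2$, so it suffices to characterise when this join of unions of complete graphs attains $q=2$. This is precisely the content of \cite[Theorem~4.10]{levene2020orthogonal}, and so the work reduces to transcribing that statement into the present language and checking that the three arithmetic cases correspond, under the dictionary ``component sizes $=$ column sums'' and ``number of isolated vertices $=\iota(\cdot)$''. I would first verify that the case split in (i)--(iii) is governed solely by whether $\iota(\m)$ and $\iota(\n)$ vanish, which is how isolated vertices enter the combinatorics below.

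For a self-contained argument I would reprove the complete-graph characterisation through compatible multiplicity matrices, as in Theorem~\ref{thm:0-1-compatible->q=2}, but now allowing arbitrary non-negative integer matrices rather than $0$-$1$ matrices. The first step is to record the realisable columns: for $K_n$ with $n\ge 2$ the achievable ordered multiplicity lists are exactly the compositions of $n$ into at least two parts (reflecting the flexibility of $S(K_n)$, cf.\ \cite{2013-Barrett-complete-graphs}), while $K_1$ forces a single entry equal to~$1$. Thus a pair of multiplicity matrices $V,W$ with a common number $r$ of eigenvalue slots fits $\bigcup K_{m_i}$ and $\bigcup K_{n_j}$ precisely when every column sums correctly and every column of size $\ge2$ has at least two nonzero entries; compatibility then demands $r\ge3$, equal middle-row sums $\widetilde V\1_k=\widetilde W\1_\ell$, and that no column of $\widetilde V$ be orthogonal to a column of $\widetilde W$. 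For sufficiency I would, in each case, construct such $V$ and $W$ explicitly: choose $r$ large enough, place each component's ``extra'' mass so that its middle block meets that of every component on the other side, and distribute the middle multiplicities using a Gale--Ryser type argument exactly as in the proof of Lemma~\ref{lem:new2}. For necessity I would extract the inequalities by counting: the equal-middle-row-sum condition, together with the requirement that no middle column of $\widetilde V$ be orthogonal to a middle column of $\widetilde W$, constrains how the total masses $|\m|$ and $|\n|$ can be split between the first/last rows and the middle block, and the stated bounds are exactly the thresholds past which such a split ceases to exist.

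The main obstacle is the case analysis forced by isolated vertices, and in particular case~(iii) with its three alternative subconditions. An isolated vertex contributes a rigid column with a single~$1$; if that $1$ lands in the first or last row it disappears from $\widetilde W$, creating a zero column that violates the nowhere-zero requirement, so such vertices must be accommodated inside the middle block, and this is what caps how many of them the structure can support. The three disjuncts in~(iii) reflect three genuinely different optimal shapes for $(V,W)$ depending on the relative sizes of $\ell$, $2k$, $|\m|$ and $|\n|$ (few slots with wide columns versus many slots with narrow ones), and matching the bookkeeping of these shapes to the claimed thresholds is where the care is needed; the asymmetry between $\iota(\m)$ and $\iota(\n)$, which breaks the $k\le\ell$ symmetry, is what separates case~(ii) from case~(iii). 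Since all of this is carried out in \cite{levene2020orthogonal}, my proof would invoke that theorem after the reduction of Remark~\ref{remark:SJO-K-AJO-P}, with the verification of the transcription as the only new ingredient.
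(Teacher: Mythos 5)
Your proposal takes essentially the same route as the paper: the paper gives no internal proof of this theorem, but simply restates \cite[Theorem~4.10]{levene2020orthogonal} in the language of sometimes join-orthogonalisability via the reduction of Remark~\ref{remark:SJO-K-AJO-P}, which is exactly your first paragraph and your concluding sentence. Your sketched self-contained reproof through general (non-$0$-$1$) compatible multiplicity matrices is supplementary and left at the level of a plan, but since you ultimately invoke the cited theorem after the same reduction, your argument coincides with the paper's.
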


We now define two combinatorial conditions of relevance to our problem.

\begin{definition}\label{def:suitability}
  Let $k\le \ell$ be natural numbers and write $\psi=\lceil \ell/k\rceil$. We say that a pair $(\m,\n)\in \bN^k\times \bN^\ell$ is \emph{weakly suitable} if there exist $\bdelta \in \{0,1,2\}^{k}$ and $\bepsilon\in \{0,1,2\}^{\ell}$ such that
\begin{equation}\label{eq:weakly-suitable} \m-\bdelta\ge (\psi^k),\;%
 \n-\bepsilon\in \bN^\ell \; \text{ and } \; |\m-\bdelta|=|\n-\bepsilon|.
 \end{equation}
 If in addition the inequality \begin{equation}\label{ineq:stronglysuitable}
\sum_{j=1}^\psi(\n-\boldsymbol{\varepsilon})^*_j\ge k\psi
\end{equation}
holds, then we say that the pair $(\m,\n)$ is \emph{strongly suitable}.
We say that a pair $(\bdelta,\bepsilon)$
\emph{illustrates} the weak or strong suitability of $(\m,\n)$, as appropriate, when the corresponding conditions above hold.
\end{definition}

\begin{theorem}\label{thm:main}
Let $(\m,\n)\in\bN^k \times \bN^\ell$ where $k\le\ell$.
The following statements satisfy the implications $\text{(i)}\implies\text{(ii)}\implies\text{(iii)}\implies\text{(iv)}$:
\begin{enumerate}[(i)]
\item $(\m,\n)$ is strongly suitable;
\item $(\m,\n)$ is always join-orthogonalisable;
\item $(\m,\n)$ is weakly suitable;
\item $(\m,\n)$ is sometimes join-orthogonalisable.
\end{enumerate}
\end{theorem}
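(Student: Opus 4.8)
The plan is to route everything through the combinatorial set $\widetilde{\LL}(k,\ell)$. By Remark~\ref{remark:D-AJO}, statement (ii) is equivalent to $(\m,\n)\in\LL(k,\ell)$, and by Remark~\ref{remark:D-sets-and-AJO}(\ref{remark:D-vs-tilde(D)}) this holds if and only if there exist $\bdelta\in\{0,1,2\}^k$ and $\bepsilon\in\{0,1,2\}^\ell$ with $(\m-\bdelta,\n-\bepsilon)\in\widetilde{\LL}(k,\ell)$. Thus both (i)$\implies$(ii) and (ii)$\implies$(iii) can be phrased in terms of the same pair $(\bdelta,\bepsilon)$ that illustrates suitability, and the real work is to connect the suitability inequalities to membership in $\widetilde{\LL}(k,\ell)$.

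For (i)$\implies$(ii), fix $(\bdelta,\bepsilon)$ illustrating strong suitability and write $\m'=\m-\bdelta$, $\n'=\n-\bepsilon$, so that $\m'\ge(\psi^k)$, $\n'\in\bN^\ell$ and $|\m'|=|\n'|$. The first step is to rewrite the strong-suitability inequality~\eqref{ineq:stronglysuitable} as $\sum_{j=1}^\psi(\n')^*_j=\sum_{i=1}^\ell\min\{n'_i,\psi\}\ge k\psi$. Since each $\min\{n'_i,\psi\}\ge1$ and $\ell\le k\psi\le\sum_{i}\min\{n'_i,\psi\}$, an integer intermediate-value argument produces $\bfc\in\bN^\ell$ with $1\le c_i\le\min\{n'_i,\psi\}$ and $|\bfc|=k\psi$. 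Then $\bfc\le(\psi^\ell)$, so Lemma~\ref{lem:new2} (with $s=\psi$) gives $((\psi^k),\bfc)\in\widetilde{\LL}(k,\ell)$. Finally, setting $\bfp=\m'-(\psi^k)\in\bN_0^k$ and $\bfq=\n'-\bfc\in\bN_0^\ell$, the equality $|\bfp|=|\m'|-k\psi=|\n'|-k\psi=|\bfq|$ lets Lemma~\ref{lem:new1} upgrade this to $(\m',\n')\in\widetilde{\LL}(k,\ell)$, and the two Remarks then yield (ii).

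For (ii)$\implies$(iii), membership $(\m,\n)\in\LL(k,\ell)$ again yields $\bdelta,\bepsilon$ with $(\m',\n')=(\m-\bdelta,\n-\bepsilon)\in\widetilde{\LL}(k,\ell)$. By the definition of $\widetilde{\LL}$ we have $\m'\in\bN^k$ and $\n'\in\bN^\ell$; Remark~\ref{remark:D-sets-and-AJO}(\ref{remark:D-implies-same-size}) gives $|\m'|=|\n'|$; and Lemma~\ref{lem:necessary-condition-on-m} gives $\m'\ge(\psi^k)$. These are exactly conditions~\eqref{eq:weakly-suitable}, so $(\bdelta,\bepsilon)$ illustrates weak suitability. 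For (iii)$\implies$(iv), the plan is to check that weak suitability forces one of the alternatives in Theorem~\ref{thm:join-union-complete-graphs-2}, via a short case split. Let $(\bdelta,\bepsilon)$ illustrate weak suitability and write $\m',\n'$ as before, so $|\m'|=|\n'|$ and $|\m|\ge|\m'|\ge k\psi\ge\ell$. If $k=\ell$ then $\psi=1$; writing $a=\iota(\m)$, the other $k-a$ parts of $\m$ are at least $2$, so $|\m|\ge 2k-a$ and hence $|\m|+a\ge 2k=k+\ell$, giving alternative (ii) of Theorem~\ref{thm:join-union-complete-graphs-2} when $a>0$, while when $a=0$ the bound $|\m|\ge 2k$ yields alternative (i) if $\iota(\n)=0$ and the first clause of alternative (iii) if $\iota(\n)\ne0$. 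If $k<\ell$ then $\psi\ge2$, so $\m\ge(\psi^k)$ forces $\iota(\m)=0$; when $\iota(\n)=0$, alternative (i) holds since $\ell\le|\m|$, and when $\iota(\n)\ne0$ I would establish alternative (iii) by contradiction: if its three clauses all failed then, using $\ell\le|\m|$, we would get $\ell<2k$ and hence $|\n|<2k$, contradicting $|\n|\ge|\n'|=|\m'|\ge k\psi\ge2k$.

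The main obstacle is the construction of $\bfc$ in (i)$\implies$(ii): everything hinges on recognising that~\eqref{ineq:stronglysuitable} is precisely the statement $\sum_{i}\min\{n'_i,\psi\}\ge k\psi$, which guarantees enough room below $\n'$ (and below the cap $\psi$) to hit the target column-sum $k\psi$ required by Lemma~\ref{lem:new2}. Once this identity is in hand, the remaining implications are bookkeeping on top of the already-established lemmas and the characterisation in Theorem~\ref{thm:join-union-complete-graphs-2}.
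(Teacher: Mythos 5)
Your proposal is correct and follows essentially the same route as the paper: the implication (i)$\implies$(ii) via the identity $\sum_{j=1}^\psi(\n-\bepsilon)^*_j=\sum_i\min\{n_i-\epsilon_i,\psi\}$, the construction of $\bfc$ below this cap with $|\bfc|=k\psi$, and Lemmas~\ref{lem:new1} and~\ref{lem:new2}; the implication (ii)$\implies$(iii) via Remarks~\ref{remark:D-AJO} and~\ref{remark:D-sets-and-AJO} together with Lemma~\ref{lem:necessary-condition-on-m}; and (iii)$\implies$(iv) via the case analysis against Theorem~\ref{thm:join-union-complete-graphs-2}. Your case split there (on $k=\ell$ versus $k<\ell$ rather than on $\iota(\m)$) is a cosmetic reorganisation of the same verification, and if anything spells out the alternative~(iii) check more explicitly than the paper does.
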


\begin{proof}%
  Let $\psi=\lceil \ell/k\rceil \ge1$. Then $\ell\le k\psi$.
  Suppose first that $(\m,\n)$ is a strongly suitable pair and that $(\bdelta,\bepsilon)$ illustrates the strong suitability of $(\m,\n)$.
  By~\eqref{ineq:stronglysuitable},
$$\ell\le k\psi\leq \sum_{j=1}^\psi(\n-\boldsymbol{\varepsilon})^*_j=|\bfd^*|=|\bfd|,$$
where $\bfd:=(\min\{\psi,n_j-\epsilon_j\})_{j\in [\ell]}\in \bN^\ell$.  Therefore we can decrease the entries of~$\bfd$ to obtain $\bfc\in {\bN}^{\ell}$ with $|\bfc|=k\psi$ such that $\bfc\le\bfd\le (\psi^\ell)$ and $\bfc\le \bfd\le{\n-\bepsilon}$. %
By~\eqref{eq:weakly-suitable} we see that
$\bfp:=\m-\bdelta-(\psi^k)$ and $\bfq:=\n-\bepsilon-\bfc$ have non-negative entries and satisfy \[|\bfp|=|\m-\bdelta|-k\psi = |\n-\bepsilon|-|\bfc|=|\bfq|.\] By Lemmas~\ref{lem:new1} and~\ref{lem:new2}, $((\psi^k)+\bfp,\bfc+\bfq)=(\m-\bdelta,\n-\bepsilon)\in \widetilde{\LL}(k,\ell)$.
By Remark~\ref{remark:D-sets-and-AJO}, $(\m,\n)$ is always join-orthogonalisable.

Suppose now that $(\m,\n)$ is always join-orthogonalisable.
By Remark~\ref{remark:D-AJO}, we have $(\m,\n)\in \LL(k,\ell)$ and so there exist
$\bdelta \in \{0,1,2\}^{k}$, $\bepsilon \in \{0,1,2\}^{\ell}$ so that $(\m-\bdelta, \n-\bepsilon) \in \widetilde{\LL}(k,\ell)$. It follows by Lemma~\ref{lem:necessary-condition-on-m} that $(\m,\n)$ is weakly suitable.

 Finally, suppose that $(\m,\n)$ is weakly suitable, and let $(\bdelta,\boldsymbol{\varepsilon})$ illustrate this.
To show that $(\m,\n)$ is sometimes join-orthogonalisable, it suffices to check that~\eqref{eq:weakly-suitable}
     implies that one of the conditions (i), (ii) and~(iii) in Theorem~\ref{thm:join-union-complete-graphs-2} holds.
     First note that $\m-\bdelta\ge (\psi^k)$ implies  that $|\m-\bdelta|= k\psi \ge\ell$. Hence we have $|\m|\ge |\m-\bdelta|\ge \ell$, and $|\n|\ge |\n-\bepsilon|=|\m-\bdelta|\ge \ell$. So if $\iota(\m)=0$, then either (i) or (iii) holds. If $\iota(\m)\ne 0$, then since $\m\ge \m-\bdelta\ge (\psi^k)$, we must have $\psi=1$, so $\ell=k$. Hence $|\m|+\iota(\m)=\sum_{i\in [k]:m_i\ge 2}m_i + 2\iota(\m)\ge 2k=k+\ell$, so (ii) holds.
\end{proof}

Determining whether a given pair is weakly or strongly suitable using the definitions of these properties can be inconvenient, as it involves a search over various vectors $\bdelta$ and $\bepsilon$. Our next goal (Proposition~\ref{prop:weakly-suitable-without-delta-epsilon}) is to find characterisations of these properties which do not require such a search.

The next technical lemma shows that among pairs $(\bdelta,\boldsymbol{\varepsilon})$ that could illustrate weak or strong suitability for $(\m,\n)$, it is enough to consider the pairs where either $\bdelta=(0^k)$ or $\boldsymbol{\varepsilon}=(0^\ell)$, depending on which of $|\m|$ and $|\n|$ is larger.

\begin{lemma}\label{lem:reduction}
  Let $k\le \ell$ be positive integers, $\psi=\lceil \ell/k \rceil$, $\m\in \bN^k$ and $\n\in \bN^\ell$.
  \begin{enumerate}[(i)]
  \item If $|\m|\ge |\n|$, then the following are equivalent:
    \begin{enumerate}
    \item $\m$ and $\n$ are weakly suitable (strongly suitable, respectively);
    \item there exists $\bdelta\in \{0,1,2\}^k$ so that
      the pair $(\bdelta,(0^\ell))$
      illustrates weak suitability (strong suitability, respectively) of $(\m,\n)$.
    \end{enumerate}
  \item If $|\m|\le |\n|$, then the following are equivalent:
    \begin{enumerate}
    \item $\m$ and $\n$ are weakly suitable (strongly suitable, respectively);
    \item there exists $\bepsilon\in \{0,1,2\}^\ell$ so that the pair
      $((0^k),\bepsilon)$
      illustrates weak suitability (strong suitability, respectively) of $(\m,\n)$.
    \end{enumerate}
    \item If $|\m|= |\n|$, then the following are equivalent:
    \begin{enumerate}
    \item $\m$ and $\n$ are weakly suitable (strongly suitable, respectively);
    \item the pair
      $((0^k),(0^{\ell}))$
      illustrates weak suitability (strong suitability, respectively) of $(\m,\n)$.
    \end{enumerate}
    \end{enumerate}
\end{lemma}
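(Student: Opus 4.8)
The reverse implications (b)$\implies$(a) in all three parts are immediate, since producing one illustrating pair is exactly what weak (resp.\ strong) suitability requires. All the content therefore lies in the forward implications, and by the $\m\leftrightarrow\n$ symmetry noted in Remark~\ref{remark:D-sets-and-AJO}(a), parts (i) and (ii) are mirror images: I would prove (i) carefully and obtain (ii) by interchanging the roles of $(\bdelta,\m)$ and $(\bepsilon,\n)$. Part (iii) then falls out of (i) as the special case $|\m|=|\n|$, in which the construction below forces $|\bdelta'|=0$ and hence $\bdelta'=(0^k)$.

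Two preliminary observations drive everything. First, any pair illustrating weak suitability satisfies $\m\ge(\psi^k)+\bdelta\ge(\psi^k)$, so the requirement $\m\ge(\psi^k)$ needed to take $\bdelta'=(0^k)$ in part (ii) comes for free. Second, the size condition $|\m-\bdelta|=|\n-\bepsilon|$ rearranges to $|\bdelta|-|\bepsilon|=|\m|-|\n|$; thus when $|\m|\ge|\n|$ we have $0\le|\m|-|\n|\le|\bdelta|$. For the forward direction of (i), I start from a pair $(\bdelta,\bepsilon)$ illustrating weak (resp.\ strong) suitability and lower the entries of $\bdelta$ greedily, removing a total of $|\bepsilon|$ units while staying in $\{0,1,2\}$, to obtain $\bdelta'\le\bdelta$ with $|\bdelta'|=|\m|-|\n|$; this is feasible precisely because $0\le|\m|-|\n|\le|\bdelta|$. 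Then $\m-\bdelta'\ge\m-\bdelta\ge(\psi^k)$ and $|\m-\bdelta'|=|\n|=|\n-(0^\ell)|$, so $(\bdelta',(0^\ell))$ illustrates weak suitability.

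The strong case needs the extra inequality $\sum_{j=1}^\psi \n^*_j\ge k\psi$, and here the one genuine tool is the monotonicity of conjugate partitions: if $\mathbf{x}\ge\mathbf{y}$ entrywise, then $x^*_j=|\{i:x_i\ge j\}|\ge|\{i:y_i\ge j\}|=y^*_j$ for every $j$. Applying this to $\n\ge\n-\bepsilon$ gives $\n^*_j\ge(\n-\bepsilon)^*_j$, whence $\sum_{j=1}^\psi \n^*_j\ge\sum_{j=1}^\psi (\n-\bepsilon)^*_j\ge k\psi$, so $(\bdelta',(0^\ell))$ illustrates strong suitability. The mirror argument for (ii) instead decreases $\bepsilon$ to some $\bepsilon'\le\bepsilon$ with $|\bepsilon'|=|\n|-|\m|$ (using $0\le|\n|-|\m|\le|\bepsilon|$), leaves $\m\ge(\psi^k)$ untouched, and the same monotonicity applied to $\n-\bepsilon'\ge\n-\bepsilon$ preserves the strong inequality.

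I do not expect a genuine obstacle: once the two observations above are in place, the argument is essentially bookkeeping. The point needing the most care, and the natural hinge of the proof, is that the strong-suitability inequality depends only on $\bepsilon$ and not on $\bdelta$, so zeroing out (or merely decreasing) $\bepsilon$ can only enlarge the relevant conjugate sums; the conjugate-partition monotonicity is exactly what makes this precise. The only other thing to confirm is that the greedy reduction to the prescribed target total is always possible, which is immediate from the sign inequalities $0\le|\m|-|\n|\le|\bdelta|$ and $0\le|\n|-|\m|\le|\bepsilon|$.
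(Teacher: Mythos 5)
Your proof is correct and follows essentially the same route as the paper: reduce the given illustrating pair $(\bdelta,\bepsilon)$ to one with second (resp.\ first) component zero by shrinking $\bdelta$ (resp.\ $\bepsilon$) to the prescribed total $\bigl||\m|-|\n|\bigr|$, then use entrywise monotonicity of conjugate partitions to preserve the strong-suitability inequality. The only cosmetic difference is that you obtain part (iii) directly from part (i) (forcing $|\bdelta'|=0$), whereas the paper cites (i) and (ii) together; this changes nothing of substance.
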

\begin{proof}
    \begin{enumerate}[(i)]
    \item By definition, (b) implies (a). Suppose (a) and let a pair $(\boldsymbol{\delta'},\boldsymbol{\varepsilon'})$ illustrate weak suitability (strong suitability, respectively) for $(\m,\n)$. In particular %
    $|\boldsymbol{\delta'}|=|\m|-|\n|+|\boldsymbol{\epsilon'}|\ge |\m|-|\n|$.
    Hence, we can choose $\bdelta\in\{0,1,2\}^k$ with $|\boldsymbol\delta|=|\m|-|\n|=|\boldsymbol{\delta'}|-|\boldsymbol{\epsilon'}|$ and $\bdelta\le \boldsymbol{\delta'}$.
    Clearly, $\m-\bdelta \geq \m-\boldsymbol{\delta'}\geq (\psi^k)$ and $\n\ge\n-\boldsymbol{\epsilon'}\ge (1^\ell)$, and $\sum_1^\psi n_j^*\ge \sum_1^\psi (\n-\boldsymbol{\epsilon'})_j^*$.
    This implies that $(\bdelta,(0^\ell))$ illustrates the weak suitability (strong suitability, respectively) of $(\m,\n)$, and thus (b) follows. An identical argument establishes the ``moreover'' claim.

    \item Again, suppose (a) and let a pair $(\boldsymbol{\delta'},\boldsymbol{\varepsilon'})$ illustrate weak suitability for $(\m,\n)$. By choosing $\bepsilon\le \boldsymbol{\epsilon'}$ with $|\bepsilon|=|\n|-|\m|$, we can prove $((0^k),\bepsilon)$
      illustrates weak suitability of $(\m,\n)$ by a symmetric argument as in (a). Moreover,
    \begin{align*}\sum_{j\in [\psi]} (\n-\bepsilon)^*_j&=\sum_{j\in [\psi]} |\{i \colon (\n-\bepsilon)_i\geq j\}|\\&\ge \sum_{j\in [\psi]} |\{i \colon (\n-\boldsymbol{\epsilon'})_i\geq j\}|= \sum_{j\in [\psi]} (\n-\boldsymbol{\epsilon'})^*_j,\end{align*}
    which implies that whenever $(\boldsymbol{\delta'},\boldsymbol{\varepsilon'})$ illustrate strong suitability for $(\m,\n)$, so does the pair $((0^k),\bepsilon)$.
    \item clearly follows by (i) and (ii).\qedhere
\end{enumerate}
\end{proof}

The following lemma is elementary, and we omit its easy proof.
\begin{lemma}\label{lem:epsilons}
  Consider integers $\psi,n,\epsilon,e'$
  where $\epsilon\in \{0,1,2\}$ and \[e'=
    \begin{cases}
      2&n\ge \psi+2\\
      1&n=\psi+1\\
      0&n\le \psi.
    \end{cases}
  \]
Then \[\min\{n-\epsilon,\psi\}=\min\{n,\psi\}+\min\{\epsilon,e'\}-\epsilon.\]
\end{lemma}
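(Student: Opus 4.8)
The plan is to verify the identity
\[
  \min\{n-\epsilon,\psi\}=\min\{n,\psi\}+\min\{\epsilon,e'\}-\epsilon
\]
by a direct case analysis. Since $\epsilon\in\{0,1,2\}$ and $e'$ is determined by the position of $n$ relative to $\psi$, both sides are piecewise-linear in $n$, so I would split the argument according to the three ranges defining $e'$, namely $n\le \psi$, $n=\psi+1$, and $n\ge \psi+2$. The quantity $\min\{\epsilon,e'\}-\epsilon$ on the right simply measures by how much the subtraction of $\epsilon$ is ``wasted'' once $n-\epsilon$ is capped at $\psi$, so the identity is really asserting that capping at $\psi$ before or after subtracting commutes in a controlled way.

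First I would dispose of the easy extreme cases. When $n\ge \psi+2$ we have $e'=2\ge\epsilon$, so $\min\{\epsilon,e'\}=\epsilon$ and the right-hand side collapses to $\min\{n,\psi\}=\psi$; on the left, $n-\epsilon\ge n-2\ge\psi$, so $\min\{n-\epsilon,\psi\}=\psi$ as well. Symmetrically, when $n\le\psi$ we have $e'=0$, so $\min\{\epsilon,e'\}-\epsilon=-\epsilon$ and the right-hand side is $n-\epsilon$; since $n-\epsilon\le n\le\psi$, the left-hand side is also $n-\epsilon$. Both extreme ranges therefore hold regardless of the specific value of $\epsilon$.

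The only range requiring care is the boundary case $n=\psi+1$, where $e'=1$, and here I would further split on $\epsilon\in\{0,1,2\}$. For $\epsilon=0$ both sides equal $\min\{\psi+1,\psi\}=\psi$; for $\epsilon=1$ the left-hand side is $\min\{\psi,\psi\}=\psi$ while the right-hand side is $\psi+\min\{1,1\}-1=\psi$; for $\epsilon=2$ the left-hand side is $\min\{\psi-1,\psi\}=\psi-1$ while the right-hand side is $\psi+\min\{2,1\}-2=\psi-1$. Thus all three subcases agree.

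Since these cases are exhaustive and the verification in each is a one-line comparison of integers, the identity follows. I do not anticipate any genuine obstacle here; the lemma is purely bookkeeping, and the mild subtlety is only in remembering that $e'$ is exactly the largest amount by which one may decrease $n$ without dropping below $\psi$, which is precisely why $\min\{\epsilon,e'\}$ rather than $\epsilon$ appears in the correction term. This is consistent with the paper's own remark that the proof is elementary and may be omitted.
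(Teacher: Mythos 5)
Your case analysis is correct and complete: the ranges $n\le\psi$, $n=\psi+1$, $n\ge\psi+2$ together with the subcases $\epsilon\in\{0,1,2\}$ exhaust all possibilities, and each verification checks out. The paper omits the proof as elementary, and your argument is exactly the straightforward verification that omission presumes, so there is nothing to reconcile.
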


\begin{proposition}\label{prop:weakly-suitable-without-delta-epsilon}

  Let $k\le \ell$ and $\psi=\lceil \ell/k\rceil$.
  \begin{enumerate}[(i)]
  \item A pair $(\m,\n)\in\bN^{k}\times \bN^{\ell}$ is weakly suitable if and only if $\m\geq (\psi^k)$ and 
\begin{equation}\label{eq:weak-suitability-*}
    |\n|-n_2^*-n_3^*\le |\m|\le |\n|+m_{\psi+1}^*+m_{\psi+2}^*.
\end{equation} 
Moreover,~\eqref{eq:weak-suitability-*} is equivalent to
\begin{equation}\label{eq:-weak-suitability-majorization}
    \n \preceq_{w} (3+|\m|-\ell,3^{\ell-1})  \text{ and } |\n|\geq |\m|-m_{\psi+1}^*
-m_{\psi+2}^*.
\end{equation}
\item A pair $(\m,\n)\in\bN^{k}\times \bN^{\ell}$ is strongly suitable if and only if it is weakly suitable and 
\begin{align}
    \sum_{j=1}^\psi n^*_j&\ge  k\psi+\max\{0,|\n|-|\m|-n^*_{\psi+1}-n^*_{\psi+2}\}.\label{eq:strong3}
\end{align}
\end{enumerate}
\end{proposition}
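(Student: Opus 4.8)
The plan is to reduce the whole proposition to two elementary counting identities for the conjugate-partition quantities, combined with Lemmas~\ref{lem:reduction} and~\ref{lem:epsilons}. The identities are: for $\xx\in\bN_0^\ell$ and $c\in\bN$ one has $\sum_{j=1}^{c}x_j^*=\sum_i\min\{x_i,c\}$; and, whenever $x_i\ge c$ for all $i$, $\sum_i\min\{2,x_i-c\}=x_{c+1}^*+x_{c+2}^*$. Applying the second identity to $(\m,\psi)$ and to $(\n,1)$ says precisely that the largest admissible weight of a correction vector is $m_{\psi+1}^*+m_{\psi+2}^*$ for $\bdelta$ (subject to $\0\le\bdelta\le(2^k)$ and $\m-\bdelta\ge(\psi^k)$) and $n_2^*+n_3^*$ for $\bepsilon$ (subject to $\0\le\bepsilon\le(2^\ell)$ and $\n-\bepsilon\ge(1^\ell)$). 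For part~(i) the forward implication is then immediate: an illustrating pair $(\bdelta,\bepsilon)$ gives $\m\ge\m-\bdelta\ge(\psi^k)$, while $|\m|-|\n|=|\bdelta|-|\bepsilon|$ together with the two weight bounds yields $-(n_2^*+n_3^*)\le|\m|-|\n|\le m_{\psi+1}^*+m_{\psi+2}^*$, i.e.~\eqref{eq:weak-suitability-*}. Conversely, assuming $\m\ge(\psi^k)$ and~\eqref{eq:weak-suitability-*}, I would split on the sign of $|\m|-|\n|$ and, in each case, build an illustrating pair with one correction vector zero by distributing the required total weight $\bigl||\m|-|\n|\bigr|$ among the per-coordinate caps; \eqref{eq:weak-suitability-*} is exactly the assertion that this total does not exceed the relevant maximal weight, so the distribution exists.

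For the ``moreover'' equivalence, the two right-hand inequalities coincide after rearrangement, so only $|\n|-n_2^*-n_3^*\le|\m|\iff\n\preceq_w(3+|\m|-\ell,3^{\ell-1})$ remains. Taking $\n$ non-increasing (both sides are order-independent), the partial sums of the right-hand tuple $\uu$ are $\sum_{i=1}^t u_i=3t+|\m|-\ell$. The increments of $D(t):=\sum_{i=1}^t(n_i-3)$ equal $n_t-3$, so $D$ rises while $n_t\ge3$ and falls afterwards, attaining its maximum at $t=n_3^*$; the counting identities then give $D(n_3^*)=|\n|-n_2^*-n_3^*-\ell$. Hence the family of partial-sum inequalities defining $\n\preceq_w\uu$ collapses to the single inequality $D(n_3^*)\le|\m|-\ell$, which is the claimed equivalence.

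For part~(ii), I would first rewrite the strong inequality~\eqref{ineq:stronglysuitable} as $\sum_i\min\{n_i-\epsilon_i,\psi\}\ge k\psi$ using the first identity, and then apply Lemma~\ref{lem:epsilons} coordinatewise to turn this into $\sum_i(\epsilon_i-e_i')_+\le\sum_{j=1}^\psi n_j^*-k\psi$, where the thresholds satisfy $\sum_i e_i'=n_{\psi+1}^*+n_{\psi+2}^*$ by the second identity (and each $e_i'\le\min\{2,n_i-1\}$, so the free budget is always admissible). Thus $(\m,\n)$ is strongly suitable iff it is weakly suitable and the minimal total excess $\min_{\bepsilon}\sum_i(\epsilon_i-e_i')_+$ over illustrating $\bepsilon$ is at most $\sum_{j=1}^\psi n_j^*-k\psi$. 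Using Lemma~\ref{lem:reduction} to take $\bdelta=\0$ when $|\m|\le|\n|$ (and $\bepsilon=\0$ otherwise), the competing $\bepsilon$ have fixed weight $(|\n|-|\m|)_+$ and caps $\epsilon_i\le\min\{2,n_i-1\}$; filling each free budget $e_i'$ before incurring any cost shows this minimum equals $\max\{0,|\n|-|\m|-n_{\psi+1}^*-n_{\psi+2}^*\}$, giving~\eqref{eq:strong3}.

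I expect the main obstacle to be part~(ii): correctly invoking Lemma~\ref{lem:epsilons} to isolate the excess term, and then proving that the greedy ``fill the free budget first'' allocation is optimal. The lower bound $\sum_i(\epsilon_i-e_i')_+\ge\sum_i\epsilon_i-\sum_i e_i'$, together with nonnegativity, handles one direction, while achievability requires the feasibility bound $(|\n|-|\m|)_+\le n_2^*+n_3^*$ supplied by weak suitability. The $D(t)$-maximisation in the ``moreover'' step is the other delicate point, but it follows routinely once one observes the monotonicity of the increments $n_t-3$.
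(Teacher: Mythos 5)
Your proposal is correct and takes essentially the same approach as the paper: the same reduction to one-sided correction vectors via Lemma~\ref{lem:reduction}, the same conjugate-partition counting (your cap sums $\sum_i\min\{2,m_i-\psi\}=m^*_{\psi+1}+m^*_{\psi+2}$ and $\sum_i\min\{2,n_i-1\}=n^*_2+n^*_3$ are exactly the paper's computations of $\bigl|\sum_{i:m_i\ge\psi+1}\mathbf{e}_i+\sum_{j:m_j\ge\psi+2}\mathbf{e}_j\bigr|$ and $|\mathbf{e}|$), and for part~(ii) the same use of Lemma~\ref{lem:epsilons}, with your excess $\sum_i(\epsilon_i-e_i')_+$ being precisely the paper's $|\boldsymbol{\epsilon''}|$ and your greedy ``fill the free budget first'' allocation matching the paper's choice \eqref{eq:epsilonchoice}. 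Your $D(t)$-maximisation for the ``moreover'' clause is a repackaging of the paper's evaluation of the partial sums at $t=n_4^*$; the only caveat is that your collapse to the single inequality at $t=n_3^*$ (like the paper's ``clearly holds'' step when $n_4^*=0$) implicitly uses $|\m|\ge\ell$, which is legitimate here because the equivalence is only ever invoked alongside the hypothesis $\m\ge(\psi^k)$.
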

\begin{proof}
First suppose that $|\n|\leq |\m|$.  By Lemma~\ref{lem:reduction}, $(\m,\n)$ is weakly suitable if and only if there is $\bdelta\in \{0,1,2\}^k$ so that $(\bdelta,\mathbf{0})$ illustrates weak suitability:
\begin{equation}\label{eq:weak}
\m-\bdelta\geq (\psi^k), \; \n \geq (1^{\ell}), \;|\m-\bdelta|=|\n|.
\end{equation}
Observe that $\m-\bdelta\geq (\psi^k)$ is equivalent to $\m\ge(\psi^k)$ and $\bdelta\le \sum_{i:m_i\ge \psi+1} \mathbf{e}_i+\sum_{j:m_j\ge \psi+2}\mathbf{e}_j$, that $\big|\sum_{i:m_i\ge \psi+1} \mathbf{e}_i+\sum_{j:m_j\ge \psi+2}\mathbf{e}_j\big|=m_{\psi+1}^*+m_{\psi+2}^*$, and that $|\m-\bdelta|=|\n|$ is equivalent to $|\bdelta|=|\m|-|\n|$. We conclude that there exists $\bdelta\in \{0,1,2\}^k$ so that \eqref{eq:weak} holds, if and only if $\m\geq (\psi^k)$ and $|\m|-|\n|\leq m_{\psi+1}^*+m_{\psi+2}^*$. 
By Lemma~\ref{lem:reduction}, $(\m,\n)$ is strongly suitable if and only if there is $\bdelta\in \{0,1,2\}^k$ so that $(\bdelta,\mathbf{0})$ illustrates strong suitability. Hence,  $(\m,\n)$ is strongly suitable if and only if it is weakly suitable and \eqref{eq:strong3} holds. 

Now suppose that $|\n|>|\m|$. In this case $(\m,\n)$ is weakly suitable if and only if there is $\bepsilon\in \{0,1,2\}^\ell$ so that $(\mathbf{0},\bepsilon)$ illustrates this: 
\begin{equation*}\label{eq:weak2}
\m\geq (\psi^k), \; \n-\bepsilon \geq (1^{\ell}),\; |\m|=|\n-\bepsilon|.
\end{equation*}
Equivalently, 
\begin{equation}\label{eq:weak3}
\m\geq (\psi^k), \; \bepsilon\leq \mathbf{e}:=\sum_{i:n_i\ge 2} \mathbf{e}_i+\sum_{j:n_j\ge 3}\mathbf{e}_j, \;
|\bepsilon|=|\n|-|\m|.
\end{equation} Since $|\mathbf{e}|=n_{2}^*+n_{3}^*$, there exists $\bepsilon\in \{0,1,2\}^\ell$ so that \eqref{eq:weak3} holds for some $\bepsilon\in \{0,1,2\}^\ell$ if and only if $\m\geq (\psi^k)$ and $|\n|-|\m|\le n_{2}^*+n_{3}^*$. 

For strong suitability, we first use Lemma~\ref{lem:reduction} and our arguments above to assert: $(\m,\n)$ is strongly suitable if and only if there exists $\bepsilon\in \{0,1,2\}^\ell$ so that \eqref{eq:weak3} and \eqref{ineq:stronglysuitable} hold. Within the set of $\bepsilon$ that satisfy \eqref{eq:weak3}, we aim to identify those for which $\sum_{j=1}^\psi(\n-\boldsymbol{\varepsilon})^*_j$ attains its maximum value. To this end we 
define $\mathbf{e'}:=\sum_{i:\psi+1\leq n_i} \mathbf{e}_i+\sum_{j:\psi+2\leq n_j}\mathbf{e}_j$ and $\mathbf{e''}:=\sum_{i:\psi+1>n_i\ge 2} \mathbf{e}_i+\sum_{j:\psi+2>n_j\ge 3}\mathbf{e}_j$, so that $\mathbf{e}=\mathbf{e'}+\mathbf{e''}$ and $|\mathbf{e'}|=n_{\psi+1}^*+n_{\psi+2}^*$. Note that any $\bepsilon$ satisfying $\bepsilon\leq \mathbf{e}$ can be written as $\bepsilon=\boldsymbol{\epsilon'}+\boldsymbol{\epsilon''}$ with $\boldsymbol{\epsilon'}:=\min\{\bepsilon,\boldsymbol{e'}\}$ and $\boldsymbol{\epsilon''}=\bepsilon-\boldsymbol{\epsilon'}$.  
By Lemma~\ref{lem:epsilons}, we have
\begin{align*} 
\sum_{j=1}^\psi (\n-\bepsilon)_j^*&=\sum_{j=1}^\ell \min\{n_j-\epsilon_j,\psi\}=\sum_{j=1}^\ell (\min\{n_j,\psi\}+\min\{\epsilon,e'_j\}-\epsilon_j)\\&=\left(\sum_{j=1}^\psi n_j^*\right)-|\boldsymbol{\epsilon''}|.
\end{align*}
Hence this quantity is maximised when $|\boldsymbol{\epsilon''}|$ is as small as possible, i.e., when $\bepsilon$ satisfies
\begin{equation}\label{eq:epsilonchoice}
\text{$\bepsilon\le \boldsymbol{e'}$ if $|\n|-|\m|\le |\boldsymbol{e'}|$, and $\boldsymbol{e'}\le \bepsilon\le\boldsymbol{e}$ otherwise.}
\end{equation}
For such $\bepsilon$ we have
$\sum_{j=1}^\psi(\n-\boldsymbol{\varepsilon})^*_j=\sum_{j=1}^\psi n^*_j-\max\{0,|\n|-|\m|-n^*_{\psi+1}-n^*_{\psi+2}\}$. It follows that the existence of $\bepsilon\in \{0,1,2\}$ satisfying \eqref{eq:weak3} and \eqref{ineq:stronglysuitable} is equivalent to condition~\eqref{eq:strong3}. %

Finally, we prove that~\eqref{eq:weak-suitability-*} is equivalent to~\eqref{eq:-weak-suitability-majorization}. Without loss, we may assume that $\n$ is non-increasing. Since $n_1^*=\ell$, observe that $|\n|-n_2^*-n_3^*\le |\m|$ in~(\ref{corollary:k-divides-l-part-3}) is equivalent to \begin{equation}\label{eq:sum-n4*}
        \sum_{i\geq 4} n_i^*\leq |\m|-\ell.
    \end{equation}
If $\n$ satisfies~\eqref{eq:sum-n4*}, let $\bfp=(n_4^*,n_5^*,\dots)^*$. Then $|\bfp|\le |\m|-\ell$, and we have $\n=(3^{a},2^b,1^c)+\bfp$ where $a=n_3^*$, $b=n_2^*-n_3^*$ and $c=\ell-n_2^*$. In particular, since $\n\leq (3^{\ell})+\bfp$, it follows that $\n \preceq_{w} (3^{\ell})+(|\m|-\ell,0^{\ell-1})=(3+|\m|-\ell,3^{\ell-1})$ and so (\ref{corollary:k-divides-l-part-3}) implies (\ref{corollary:k-divides-l-part-4}).
   Conversely, let $\n \preceq_{w} \mathbf{u}:=(3+|\m|-\ell,3^{\ell-1})$. If $n_4^*=0$, then~\eqref{eq:sum-n4*} clearly holds. Otherwise,  let $j=n_4^*\ge1$, so that $n_j\geq 4$ and $n_{j+1}\leq 3$. Then %
    we have 
    \[
    3j+\sum_{i\ge 4}n_i^*=\sum_{i\in [j]}n_i\le \sum_{i\in [j]}u_i= 3j+|\m|-\ell
    \]
    which implies~\eqref{eq:sum-n4*}. Thus (\ref{corollary:k-divides-l-part-4}) implies (\ref{corollary:k-divides-l-part-3}).
\end{proof}

\section{Equivalence and non-equivalence}

In this section, we show that while the various implications in Theorems~\ref{thm:main:nec-suff-q=2} and~\ref{thm:main} cannot generally be reversed, they may be under certain additional hypotheses.

\begin{proposition}\label{prop:strict-inclusions}
  None of the five implications in Theorems~\ref{thm:main:nec-suff-q=2} and~\ref{thm:main} is an equivalence in general.
\end{proposition}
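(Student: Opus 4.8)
The plan is to prove each of the five implications strict by exhibiting an explicit witness pair $(\m,\n)$. Since the implication ``strongly suitable $\implies$ always join-orthogonalisable'' is, by Proposition~\ref{prop:weakly-suitable-without-delta-epsilon}(ii), literally the same statement in both theorems, the five arrows collapse to four genuine separations, for which I would produce: \textbf{(A)} a pair that is always join-orthogonalisable but not strongly suitable; \textbf{(B)} a weakly suitable pair that is not always join-orthogonalisable; \textbf{(C)} a sometimes join-orthogonalisable pair that is not weakly suitable; and \textbf{(D)} a pair satisfying condition~(iv) of Theorem~\ref{thm:main:nec-suff-q=2}, i.e.\ weak suitability together with the isolated-vertex bound, that is nonetheless not always join-orthogonalisable. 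For each witness I would verify the suitability conditions against the arithmetic criteria of Proposition~\ref{prop:weakly-suitable-without-delta-epsilon}, always join-orthogonalisability against Remark~\ref{remark:D-AJO} and Theorem~\ref{thm:0-1-compatible->q=2}, and sometimes join-orthogonalisability against Theorem~\ref{thm:join-union-complete-graphs-2}.

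For the three routine separations I would argue as follows. For \textbf{(C)}, take $(\m,\n)=((3,1),(1^3))$ with $k=2$, $\ell=3$, $\psi=2$: here $\m\not\ge(\psi^k)=(2,2)$, so the pair is not weakly suitable, while Theorem~\ref{thm:join-union-complete-graphs-2}(ii) gives sometimes join-orthogonalisability because $\iso(\m)=1$ and $k+\ell=5\le |\m|+\iso(\m)=5$. For \textbf{(B)}, take $(\m,\n)=((2^4),(3,1^5))$ with $k=4$, $\ell=6$, $\psi=2$: since $\m=(\psi^k)$ and $|\m|=|\n|$, the pair is weakly suitable, but $\iso(\n)=5>4=\iota_{\max}(4,6)$; as subtracting a vector in $\{0,1,2\}^\ell$ from $\n$ cannot decrease the number of isolated vertices, Proposition~\ref{prop:necc2} applied to every reduction $(\m-\bdelta,\n-\bepsilon)$, together with Remark~\ref{remark:D-sets-and-AJO}, rules the pair out of $\LL(4,6)$ and hence out of always join-orthogonalisability. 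For \textbf{(A)}, take $(\m,\n)=((2^3),(3,1^3))$ with $k=3$, $\ell=4$, $\psi=2$: the strong-suitability inequality~\eqref{eq:strong3} fails, since its left side $\sum_{j=1}^2 n_j^*=5$ lies below $k\psi=6$; yet taking the columns of $V$ to be the supports $\{1,2\},\{1,3\},\{1,4\}$ and the columns of $W$ the supports $\{2,3,4\},\{1\},\{1\},\{1\}$ yields equal row sums $(3,1^3)$ and a nowhere-zero $V\trans W$, so $(\m,\n)\in\widetilde{\LL}(3,4)\subseteq\LL(3,4)$ and the pair is always join-orthogonalisable.

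The main obstacle is separation \textbf{(D)}: producing a pair that passes every necessary condition recorded so far and proving outright that it is not always join-orthogonalisable. My candidate is $(\m,\n)=((2^4),(5,2,1^4))$ with $k=4$, $\ell=6$, $\psi=2$. Proposition~\ref{prop:weakly-suitable-without-delta-epsilon} shows this pair is weakly suitable, and $\iso(\n)=4=\iota_{\max}(4,6)$, so condition~(iv) holds. The crux is to show $(\m,\n)\notin\LL(4,6)$. Because $\m=(\psi^k)$, Lemma~\ref{lem:necessary-condition-on-m} forces any admissible reduction on the $\m$-side to be $\bdelta=\0$, so by Remark~\ref{remark:D-sets-and-AJO} always join-orthogonalisability would require some $\bepsilon\in\{0,1,2\}^6$ with $\n-\bepsilon\in\bN^6$, $|\n-\bepsilon|=k\psi=8$, and $((2^4),\n-\bepsilon)\in\widetilde{\LL}(4,6)$. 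The key step is then a complete description of the $\bfc\in\bN^6$ with $|\bfc|=8$ for which $((2^4),\bfc)\in\widetilde{\LL}(4,6)$: the only two partitions of $8$ into six positive parts are $(3,1^5)$ and $(2,2,1^4)$, the former excluded by Proposition~\ref{prop:necc2} and the latter placed in $\widetilde{\LL}(4,6)$ by the Gale--Ryser construction of Lemma~\ref{lem:new2}. Since the part of size $5$ in $\n$ cannot be reduced to a value $\le 2$ by subtracting an entry of $\bepsilon\in\{0,1,2\}$, no admissible $\bepsilon$ can yield $\n-\bepsilon=(2,2,1^4)$, whence the pair fails to be always join-orthogonalisable.

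I expect the delicate point to be precisely this exhaustive ruling-out of reductions in \textbf{(D)}: unlike the necessary conditions used in \textbf{(B)} and \textbf{(C)}, it requires knowing exactly which column-sum tuples lie in $\widetilde{\LL}$, i.e.\ combining the sufficiency of Lemma~\ref{lem:new2} with the necessity of Proposition~\ref{prop:necc2} into a characterisation for this small instance. The argument is kept tractable by the extremal choices $\m=(\psi^k)$, which pins $\bdelta=\0$, and $|\m|=8$, which keeps the number of relevant tuples $\bfc$ finite and tiny. Finally, I would note that the witness for \textbf{(D)} is itself weakly suitable but not always join-orthogonalisable, so it simultaneously re-establishes separation \textbf{(B)}; I present the two pairs separately only to display the two distinct mechanisms (violation of the isolated-vertex bound versus an oversized component that no reduction can shrink).
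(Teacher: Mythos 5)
Your proposal is correct. Three of your four separations track the paper's own proof closely: for \textbf{(A)} you use the identical pair $((2^3),(3,1^3))$ with the very same matrices $V,W$ (and your inclusion $\widetilde{\LL}(3,4)\subseteq\LL(3,4)$ is stated in the correct direction, whereas the paper writes it backwards); for \textbf{(B)} you use the identical pair $((2^4),(3,1^5))$, and your justification---applying Proposition~\ref{prop:necc2} to an arbitrary reduction via the monotonicity $\iota(\n-\bepsilon)\ge\iota(\n)$---is exactly the argument of Corollary~\ref{cor:paths-q>2}, slightly cleaner than the paper's route of first pinning $\bdelta=\bepsilon=\mathbf{0}$; for \textbf{(C)} your pair $((3,1),(1^3))$ plays the same role as the paper's $((4,1),(1,1))$, both certified by Theorem~\ref{thm:join-union-complete-graphs-2}(ii). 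The genuine divergence is in \textbf{(D)}. The paper takes $((2^5),(3,2,1^5))\in\bN^5\times\bN^7$ and must run a bespoke combinatorial analysis of hypothetical compatible $0$-$1$ matrices (counting elementary columns, forcing a full row of ones in $V$, and reducing to $((3,2),(1^5))\in\widetilde{\LL}(2,5)$, contradicting Lemma~\ref{lem:necessary-condition-on-m}). Your witness $((2^4),(5,2,1^4))\in\bN^4\times\bN^6$ is engineered so that no new combinatorics is needed: $\m=(\psi^k)$ forces $\bdelta=\mathbf{0}$ by Lemma~\ref{lem:necessary-condition-on-m}, Remark~\ref{remark:D-sets-and-AJO} forces $|\bepsilon|=|\n|-|\m|=3$, the oversized part $5$ makes $(3,1^5)$ the unique achievable reduction, and that reduction is excluded by Proposition~\ref{prop:necc2} since $\iota((3,1^5))=5>4=\iota_{\max}(4,6)$; all numerical checks of condition~(iv) of Theorem~\ref{thm:main:nec-suff-q=2} for this pair are correct. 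What your approach buys is a shorter, purely mechanical proof reusing only the general necessary conditions already established; what it loses is the datum recorded by the paper's example, namely that $(k,\ell)=(5,7)$ separates conditions (ii) and (iv) of Theorem~\ref{thm:main:nec-suff-q=2}, which the paper invokes in its closing open question---your example shows instead that $(k,\ell)=(4,6)$ is such a pair, which is of independent interest but would require rewording that question.
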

\begin{proof}
{\bf Theorem~\ref{thm:main}: (ii) does not imply (i), and Theorem~\ref{thm:main:nec-suff-q=2}: (ii) does not imply (i).} To see that the converse of the first implication in Theorem~\ref{thm:main} is not true,
  consider the pair
  $(\m,\n)=((2^3),(3,1^3))\in \bN^3\times \bN^4$. We have $|\m|=|\n|$, so it follows easily using Lemma~\ref{lem:reduction}(iii) that \eqref{ineq:stronglysuitable} is not fulfilled, and hence $(\m,\n)$ is not strongly suitable. On the other hand, for $\bfr=(3,1^3)$, the matrices
  \begin{equation}\label{eq:222,3111}
    V=\left(
      \begin{tabular}{ccc}
        1&1&1\\
        1&0&0\\
        0&1&0\\
        0&0&1
      \end{tabular}\right)
    \text{ and }
    W=
    \left(
      \begin{tabular}{ccccccccc}
        0&1&1&1\\
        1&0&0&0\\
        1&0&0&0\\
        1&0&0&0
      \end{tabular}
    \right)
  \end{equation}
  satisfy $V\in \A(\bfr,\m)$, $W\in \A(\bfr,\n)$ and $V\trans W\in \bN^{3\times 4}$, so $(\m,\n)\in \LL(3,4)\subset\widetilde{\LL}(3,4)$, hence $(\m,\n)$ is always join-orthogonalisable by Remark~\ref{remark:D-AJO}. Moreover, the same pair $(\m,\n)=((2^3),(3,1^3))$ does not fulfil the last inequality in Theorem~\ref{thm:main:nec-suff-q=2}(i) since in this case the left-hand side of the inequality is equal to $5$ and the right-hand side is equal to $6$, therefore Theorem~\ref{thm:main:nec-suff-q=2}(ii) does not imply (i).
  
\noindent{\bf Theorem~\ref{thm:main}: (iii) does not imply (ii).}  Observe (either by definition or using Proposition~\ref{prop:weakly-suitable-without-delta-epsilon})
  that the pair $(\m,\n)=((2^4),(3,1^5))\in \bN^4\times \bN^6$ is
  weakly suitable. 
  However,
  it is not always join-orthogonalisable.
  To see this, suppose to the
  contrary; then $(\m,\n)\in \LL(4,6)$. By
  Remark~\ref{remark:D-sets-and-AJO}(\ref{remark:D-vs-tilde(D)}), there exist vectors $\bdelta$ and $\bepsilon$ with non-negative entries such that $(\m-\bdelta,\n-\bepsilon)\in \widetilde{\LL}(4,6)$. Since $|\m|=|\n|$, by  Remark~\ref{remark:D-sets-and-AJO}(\ref{remark:D-implies-same-size}) we have $|\bepsilon|=|\bdelta|$. By
  Lemma~\ref{lem:necessary-condition-on-m} we have $\m=(2^4)\le\m-\bdelta$, so $\bdelta=\boldsymbol{0}$ and hence $\bepsilon=\boldsymbol{0}$, so $(\m,\n)\in \widetilde{\LL}(4,6)$.
    However, $\iota_{\max}(4,6)=4<\iota(\n)$, which contradicts Proposition~\ref{prop:necc2}.
For an example, see Figure~\ref{fig:SJO-but-not-AJO}. 

\begin{figure}[htb] 
 \tikzset{
    every node/.style={draw, circle, fill=white, inner sep=1pt}
    }
\begin{subfigure}{0.45\textwidth}  
  \centering
\begin{tikzpicture}[scale=2.5]
\foreach \x in {-1.5,-1,...,1.5,2}{
  \foreach \y in {-1.5,-1,...,1,2}{
  \draw[color=gray] (0,\x)--(2,\y); 
    }
    \draw[color=gray] (0,\x)--(2.25,1.5); 
    }
\foreach \x in {-1.5,-0.5,0.5,1.5}{
            \draw[thick] (0,\x)--(0,\x+0.5);
             \node at (0,\x) { };
             \node at (0,\x+0.5) { };
    }
\foreach \x in {-1.5,-1,-0.5,0,0.5}{
             \node at (2,\x) { };
    }
 \node (1)  at (2,1) {}; 
 \node (2)  at (2.25,1.5) {}; 
 \node (3)  at (2,2) {}; 
\draw[thick] (1) -- (2) --(3);
\end{tikzpicture}
    \caption{$4P_2 \vee (P_3\cup 5P_1)$}
     \label{fig:not-AJO2}
    \end{subfigure}
\begin{subfigure}{0.45\textwidth}  
  \centering
\begin{tikzpicture}[scale=2.5]
\foreach \x in {-1.5,-1,...,1.5,2}{
  \foreach \y in {-1.5,-1,...,1,2}{
  \draw[color=gray] (0,\x)--(2,\y); 
    }
    \draw[color=gray] (0,\x)--(2.25,1.5); 
    }
\foreach \x in {-1.5,-0.5,0.5,1.5}{
            \draw[thick] (0,\x)--(0,\x+0.5);
             \node at (0,\x) { };
             \node at (0,\x+0.5) { };
    }
\foreach \x in {-1.5,-1,-0.5,0,0.5}{
             \node at (2,\x) { };
    }
 \node (1)  at (2,1) {}; 
 \node (2)  at (2.25,1.5) {}; 
 \node (3)  at (2,2) {}; 
\draw[thick] (1) -- (2) --(3);
\draw[thick] (3)--(1);
\end{tikzpicture}
\caption{$4P_2 \vee (K_3\cup 5P_1)$}  \label{fig:SJO1}
\end{subfigure}
\caption{The pair $(\m,\n)=((2^4),(3,1^5))$ is sometimes but not always join-orthogonalisable (see the proof of Proposition~\ref{prop:strict-inclusions}). This figure shows two graphs, both sized by $(\m,\n)$ and differing by only one edge, where the graph in (b) has $q=2$, but the graph in (a) does not. This follows immediately from Remark~\ref{remark:SJO-K-AJO-P}.}\label{fig:SJO-but-not-AJO}
\end{figure}

\noindent{\bf Theorem~\ref{thm:main}: (iv) does not imply (iii).} It is easy to see that $(\m,\n)=((4,1),(1,1))$ is not weakly suitable, but it satisfies property~(ii) in Theorem~\ref{thm:join-union-complete-graphs-2}, so $(\m,\n)$ is sometimes join-orthogonalisable.

\noindent{\bf Theorem~\ref{thm:main:nec-suff-q=2}: (iv) does not imply (iii).} It is straightforward to check that $(\m,\n):=((2^5),(3,2,1^5))\in \bN^5\times \bN^7$ satisfies (iv); we claim that this pair is not always join-orthogonalisable. To see this, suppose to the
  contrary that $(\m,\n)\in \LL(5,7)$. By a similar argument as above,
  Remark~\ref{remark:D-sets-and-AJO}(\ref{remark:D-vs-tilde(D)}),(\ref{remark:D-implies-same-size}) and 
  Lemma~\ref{lem:necessary-condition-on-m} imply that $(\m,\n)\in \widetilde{\LL}(5,7)$, so there are $0$-$1$ matrices $V$ and $W$ with equal row-sums such that the column-sums of $V$ and
  $W$ are $\m=(2^5)$ and $\n=(3,2,1^5)$, respectively, and
  $V\trans W\in \bN^{5\times 7}$. Five columns of $W$ have sum $1$; we call such columns \emph{elementary}. By
  permuting rows of $V$ and $W$, we may assume that
  $\mathbf{e}_1\trans$ is a column of $W$, and since $V\trans W$ has no zero entry, this implies that the first row of $V$ is full of ones.
  Hence, the first row-sum of $W$ is~$5$. Since there are only two columns of $W$ with a sum of more than one, at least three columns of $W$ must be equal to $\mathbf{e}_1\trans$. If any of the elementary columns of $W$ were different from $\mathbf{e}_1\trans$, then by the same argument, there would be at least three such columns. Then $W$ would contain at least six elementary columns, whereas we know it has exactly five. Therefore, $W$ has five columns which are all equal to $\mathbf{e}_1\trans$ and the first row of $V$ is full of ones. 
  Let $V'$ be the $0$-$1$ matrix with column sums $(1,1,1,1,1)$ obtained by deleting the first row of $V$, and let $W'$ be the $0$-$1$ matrix with column sums $(3,2)$ obtained by deleting the first row of $W$ and the five columns equal to $\mathbf{e}_1\trans$. Since $V'$ and $W'$ are compatible, it follows that $((3,2),(1^5))\in \widetilde{\LL}(2,5)$, which contradicts Lemma~\ref{lem:necessary-condition-on-m}.
\end{proof}

We now investigate the influence the parameter $\iota(\n)$ can have on whether or not a pair $(\m,\n)$ is always join-orthogonalisable. 
Roughly speaking, our next result (which completes the proof of Theorem~\ref{thm:main:nec-suff-q=2}) shows that always join-orthogonalisablity of $(\m,\n)$ implies that $\iota(\n)$ is not too large. %
Then in Proposition~\ref{prop:omega=1} we show that in some cases, a converse result holds. %

\begin{corollary}\label{cor:paths-q>2}
    If $k <\ell$
    and $(\m,\n)\in \bN^k\times \bN^\ell$ is always join-orthogonalisable, 
    then $\iota(\n)\le \iota_{\max}(k,\ell)$.
\end{corollary}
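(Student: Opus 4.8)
The plan is to reduce the statement about always join-orthogonalisability to a statement about $\widetilde{\LL}(k,\ell)$, where the machinery of Proposition~\ref{prop:necc2} already applies. Suppose $(\m,\n)\in\bN^k\times\bN^\ell$ is always join-orthogonalisable. By Remark~\ref{remark:D-AJO}, this means $(\m,\n)\in \LL(k,\ell)$, and by Remark~\ref{remark:D-sets-and-AJO}(\ref{remark:D-vs-tilde(D)}) there exist $\bdelta\in\{0,1,2\}^k$ and $\bepsilon\in\{0,1,2\}^\ell$ with $(\m-\bdelta,\n-\bepsilon)\in\widetilde{\LL}(k,\ell)$. The key difficulty is that passing from $\n$ to $\n-\bepsilon$ can change the number of isolated vertices: subtracting $\epsilon_j$ from an entry $n_j$ could turn a part equal to $2$ into an isolated vertex, or could remove an isolated vertex entirely (if $n_j=1$ and $\epsilon_j=1$, though that would violate $\n-\bepsilon\in\bN^\ell$). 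So I cannot simply apply Proposition~\ref{prop:necc2} to $(\m-\bdelta,\n-\bepsilon)$ and read off a bound on $\iota(\n)$.

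The right approach is therefore not to first subtract $\bepsilon$ and then count isolated vertices, but rather to run the counting argument directly on the matrices $V,W$ witnessing membership in $\LL(k,\ell)$, adapting the proof of Proposition~\ref{prop:necc2}. Concretely, I would take compatible matrices $V\in\A(\bfr_V,\m)$, $W\in\A(\bfr_W,\n)$ with $\widetilde V\trans\widetilde W\in\bN^{k\times\ell}$. The isolated vertices of $H$ correspond to columns of $W$ with sum $1$; call these $\iota(\n)$ columns the elementary ones. For each elementary column of $W$, its single $1$ contributes either to the first or last row of $W$, or to the "middle" block $\widetilde W$. The compatibility condition only controls the middle block, so I expect the main obstacle to be bookkeeping which elementary columns are "trimmed" by the first/last rows and which survive into $\widetilde W$ as genuine isolated columns of the reduced problem.

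I would then argue as follows. Let $\bepsilon$ be the sum of the first and last rows of $W$ as in Remark~\ref{remark:D-sets-and-AJO}(\ref{remark:D-vs-tilde(D)}), so that $\widetilde W\in\A(\bfr,\n-\bepsilon)$ and $(\m-\bdelta,\n-\bepsilon)\in\widetilde{\LL}(k,\ell)$ where $\bdelta$ comes from the first and last rows of $V$. For an elementary column $j$ of $W$, we have $n_j=1$, and since $\n-\bepsilon\in\bN^\ell$ forces $\epsilon_j=0$; hence that single $1$ lies in the middle block, so column $j$ of $\widetilde W$ is still elementary. This shows $\iota(\n-\bepsilon)\ge\iota(\n)$. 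Now I apply Proposition~\ref{prop:necc2} to $(\m-\bdelta,\n-\bepsilon)\in\widetilde{\LL}(k,\ell)$ to conclude $\iota(\n-\bepsilon)\le\iota_{\max}(k,\ell)$, and combining the two inequalities gives $\iota(\n)\le\iota(\n-\bepsilon)\le\iota_{\max}(k,\ell)$, as required. The crux is the simple observation that an isolated vertex of $H$ cannot be removed by the $\bepsilon$-reduction, so isolated vertices only ever increase in number when passing to the tilde problem; once that is in place, Proposition~\ref{prop:necc2} does all the real work.
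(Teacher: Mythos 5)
Your proof is correct and is essentially the paper's own argument: reduce via Remark~\ref{remark:D-sets-and-AJO}(\ref{remark:D-vs-tilde(D)}) to $(\m-\bdelta,\n-\bepsilon)\in\widetilde{\LL}(k,\ell)$, note that $\n-\bepsilon\in\bN^\ell$ forces $\epsilon_j=0$ whenever $n_j=1$, so $\iota(\n)\le\iota(\n-\bepsilon)$, and then apply Proposition~\ref{prop:necc2}. The detour through the matrices $V,W$ and the worry that one ``cannot simply apply'' Proposition~\ref{prop:necc2} are unnecessary, since the inequality $\iota(\n)\le\iota(\n-\bepsilon)$ is precisely what makes the direct application work --- which is what you (and the paper) do in the end.
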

\begin{proof}
  By Remark~\ref{remark:D-sets-and-AJO}(\ref{remark:D-vs-tilde(D)}), there exist
  $\bdelta\in \{0,1,2\}^k$ and
  $\bepsilon\in \{0,1,2\}^{\ell}$ such that we have
  $(\m-\bdelta,\n-\bepsilon)\in
  \widetilde{\LL}(k,\ell)$. Hence,
  $\iota(\n)\le \iota(\n- \bepsilon)\le
  \iota_{\max}(k,\ell)$, by Proposition~\ref{prop:necc2}.
\end{proof}

\begin{proposition}\label{prop:omega=1}
  Let $k\le \ell$, $\psi=\lceil \ell/k\rceil$ and suppose $(\m,\n)\in \bN^k\times \bN^\ell$ is
  weakly suitable, illustrated by
  $(\bdelta,\bepsilon)$. If $\iota(\n-\bepsilon)\le 2\ell-\psi k$, then $(\m,\n)$ is strongly suitable and
  always join-orthogonalisable.
\end{proposition}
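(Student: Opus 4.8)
The plan is to verify that the given pair $(\bdelta,\bepsilon)$, which already illustrates weak suitability, also satisfies the strong-suitability inequality~\eqref{ineq:stronglysuitable}; once this is established, strong suitability follows immediately by definition, and always join-orthogonalisability then follows from the implication $\text{(i)}\implies\text{(ii)}$ of Theorem~\ref{thm:main}. Writing $\n'=\n-\bepsilon$, with entries $n'_i=n_i-\epsilon_i$, weak suitability~\eqref{eq:weakly-suitable} guarantees that $\n'\in \bN^\ell$, so every entry satisfies $n'_i\ge 1$, and the target inequality is $\sum_{j=1}^\psi (\n')^*_j\ge k\psi$.

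First I would rewrite the left-hand side using the standard conjugate-partition identity
\[\sum_{j=1}^\psi (\n')^*_j=\sum_{j=1}^\psi\bigl|\{i:n'_i\ge j\}\bigr|=\sum_{i=1}^\ell \min\{n'_i,\psi\},\]
obtained by interchanging the order of summation. This converts a statement about conjugate partitions into a sum of truncated entries, which is far easier to bound below.

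The core of the argument is then a counting estimate that isolates the role of $\iota(\n')$. Each of the $\ell$ entries of $\n'$ is at least $1$, so it contributes at least $1$ to the sum; moreover, each of the $\ell-\iota(\n')$ \emph{non-isolated} entries (those with $n'_i\ge 2$) contributes at least $\min\{2,\psi\}$. When $\psi\ge 2$ this yields
\[\sum_{i=1}^\ell \min\{n'_i,\psi\}\ge \iota(\n')+2\bigl(\ell-\iota(\n')\bigr)=2\ell-\iota(\n')\ge k\psi,\]
where the final inequality is precisely the hypothesis $\iota(\n-\bepsilon)\le 2\ell-\psi k$. The remaining case $\psi=1$ is immediate: then $\min\{n'_i,1\}=1$ for every $i$, so the sum equals $\ell\ge k=k\psi$ because $k\le\ell$. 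In either case~\eqref{ineq:stronglysuitable} holds for the pair $(\bdelta,\bepsilon)$, so $(\m,\n)$ is strongly suitable, and hence always join-orthogonalisable by Theorem~\ref{thm:main}.

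There is no serious obstacle in this argument: the only point requiring care is recognising that the threshold $2\ell-\psi k$ is exactly calibrated so that the crude bound ``$1$ per isolated entry, $2$ per non-isolated entry'' already reaches $k\psi$. As a sanity check, I would confirm that the hypothesis is a genuine restriction rather than vacuous or unsatisfiable: since $\psi=\lceil\ell/k\rceil$ gives $\psi k\le \ell+k-1<2\ell$ (using $k\le\ell$), the bound $2\ell-\psi k$ is strictly positive, so it genuinely constrains the number of isolated vertices.
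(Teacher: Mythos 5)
Your proposal is correct and takes essentially the same route as the paper: your entry-wise bound $\sum_{i}\min\{n'_i,\psi\}\ge \iota(\n')+2(\ell-\iota(\n'))=2\ell-\iota(\n')\ge k\psi$ is exactly the paper's estimate $\sum_{j=1}^\psi(\n-\bepsilon)^*_j\ge(\n-\bepsilon)^*_1+(\n-\bepsilon)^*_2=2\ell-\iota(\n-\bepsilon)\ge\psi k$, just phrased via the $\min$ identity instead of directly in conjugate-partition terms. Both arguments also dispose of the case $\psi=1$ separately and then invoke Theorem~\ref{thm:main} to pass from strong suitability to always join-orthogonalisability.
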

\begin{proof}
  By Theorem~\ref{thm:main}, we need only establish strong suitability.
  Conditions~(\ref{eq:weakly-suitable})
  hold, and we need to show that the
  inequality~(\ref{ineq:stronglysuitable}) is satisfied.

  If $\psi=1$ then $k=\ell$, so $k\psi=\ell= (\n-\bepsilon)_1^*$, which implies~(\ref{ineq:stronglysuitable}). 
  So we may assume that $\psi\ge2$.
  Now
  \[\sum_{j=1}^\psi (\n-\bepsilon)_j^*\ge (\n-\bepsilon)_1^*+(\n-\bepsilon)_2^*=\ell+(\ell-\iota(\n-\bepsilon))=2\ell-\iota(\n-\bepsilon)\ge \psi k.\]
  The result follows.
\end{proof}

\begin{corollary}\label{cor:k-divisible-corollary}
  If $k$ divides either $\ell$ or $\ell+1$, then conditions (i), (ii) and (iii) in Theorem~\ref{thm:main} are equivalent and conditions (i), (ii), (iii) and (iv) in Theorem~\ref{thm:main:nec-suff-q=2} are equivalent.
\end{corollary}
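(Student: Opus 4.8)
The plan is to show that, under the divisibility hypothesis, weak suitability already forces strong suitability; everything else then follows formally from the implications already established in Theorems~\ref{thm:main} and~\ref{thm:main:nec-suff-q=2}, together with the characterisations in Proposition~\ref{prop:weakly-suitable-without-delta-epsilon}.

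The crucial step is to verify that the hypothesis of Proposition~\ref{prop:omega=1} is automatic in both cases. Suppose $(\m,\n)$ is weakly suitable, illustrated by some $(\bdelta,\bepsilon)$, and write $\psi=\lceil \ell/k\rceil$. If $k$ divides $\ell$, then $\psi k=\ell$, so $2\ell-\psi k=\ell$, and the bound $\iota(\n-\bepsilon)\le 2\ell-\psi k$ holds trivially because $\n-\bepsilon$ has only $\ell$ entries. If instead $k\mid \ell+1$ (and $k\ge 2$, the case $k=1$ being subsumed by the previous one since then $k$ also divides $\ell$), then a short computation gives $\psi=(\ell+1)/k$, hence $\psi k=\ell+1$ and $2\ell-\psi k=\ell-1$. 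Here weak suitability gives $|\n-\bepsilon|=|\m-\bdelta|\ge k\psi=\ell+1$, so the $\ell$ positive-integer entries of $\n-\bepsilon$ cannot all equal $1$; therefore $\iota(\n-\bepsilon)\le \ell-1=2\ell-\psi k$. In either case Proposition~\ref{prop:omega=1} applies and $(\m,\n)$ is strongly suitable.

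Granting that weak suitability implies strong suitability, the equivalence of (i), (ii) and (iii) in Theorem~\ref{thm:main} is immediate: the chain $\text{(i)}\implies\text{(ii)}\implies\text{(iii)}$ is already in hand, and the step above closes the loop with $\text{(iii)}\implies\text{(i)}$. For Theorem~\ref{thm:main:nec-suff-q=2}, I would first record, via Proposition~\ref{prop:weakly-suitable-without-delta-epsilon}, that its condition~(i) is exactly strong suitability and that the inequalities in condition~(iv), apart from the $\iota(\n)$ bound, are exactly weak suitability. Since we already have $\text{(i)}\implies\text{(ii)}\iff\text{(iii)}\implies\text{(iv)}$, it remains only to prove $\text{(iv)}\implies\text{(i)}$; but (iv) entails weak suitability, which by the crucial step above entails strong suitability, i.e.\ condition~(i). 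Hence all four conditions are equivalent. (Notably, the $\iota(\n)$ hypothesis in (iv) is not even needed here, reflecting the fact that it becomes redundant under the divisibility assumption.)

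The only place requiring care is the crucial step, and specifically the bookkeeping when $k\mid \ell+1$: one must confirm $\psi=(\ell+1)/k$ and then exploit the strict inequality $|\m-\bdelta|\ge \ell+1>\ell$ to rule out $\n-\bepsilon=(1^\ell)$. I do not anticipate a genuine obstacle, since both cases reduce to a one-line count once Proposition~\ref{prop:omega=1} is invoked; the remaining work is purely arithmetic.
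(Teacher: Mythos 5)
Your proposal is correct and follows essentially the same route as the paper's own proof: both arguments reduce everything to showing that the hypothesis $\iota(\n-\bepsilon)\le 2\ell-\psi k$ of Proposition~\ref{prop:omega=1} is automatic (trivially when $k\mid\ell$, and via $|\n-\bepsilon|=|\m-\bdelta|\ge\psi k=\ell+1$ ruling out $\n-\bepsilon=(1^\ell)$ when $k\mid\ell+1$, $k\ge 2$), and then translate between suitability and the numerical conditions using Proposition~\ref{prop:weakly-suitable-without-delta-epsilon}. The only difference is that you spell out the formal bookkeeping for Theorems~\ref{thm:main} and~\ref{thm:main:nec-suff-q=2} that the paper leaves implicit.
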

\begin{proof}
    Let $\psi=\lceil\ell/k\rceil$.
    If $k$ divides $\ell$, then $\iota(\n')\le \ell = 2\ell-\psi k$ for any $\n'\in \bN^\ell$.

    If $k>1$ and $k$ divides $\ell+1$, then $\psi k = \ell+1$. Suppose $(\bdelta,\bepsilon)$ illustrates the weak suitability of $(\m,\n)\in \bN^k\times \bN^\ell$. Then $\ell+1=\psi k=|(\psi^k)|\le |\m-\bdelta|=|\n-\bepsilon|$, so $\n-\bepsilon\ne (1^\ell)$, so $\iota(\n-\bepsilon)\le \ell-1=2\ell-\psi k$.

    The result now follows by Proposition~\ref{prop:omega=1} and Proposition~\ref{prop:weakly-suitable-without-delta-epsilon}.
\end{proof}

 Note that Corollary~\ref{cor:paths-q>2} is independent of Theorem~\ref{thm:main}. For example, the pair $(\m,\n)=((2^4),(3,1^5))\in \bN^4\times \bN^6$ considered in the proof of Proposition~\ref{prop:strict-inclusions} is not always join-orthogonalisable, and this follows immediately from Corollary~\ref{cor:paths-q>2} since $\iota_{\max}(4,6)=4<\iota(\n)=5$. Since $(\m,\n)$ is weakly suitable, we cannot draw the same conclusion from Theorem~\ref{thm:main}.
  This example may be generalised as follows.

\begin{example}
  Let $4\le k<\ell$ and suppose $\ell\equiv 2\bmod k$, so that
  $\ell=(\psi-1)k+2$ where $\psi=\lceil \ell/k\rceil$. We have $\iota_{\max}(k,\ell)\ge (\psi-1)k=\ell-2$. By
  Corollary~\ref{cor:paths-q>2}, $(\m,\n)\in \bN^k\times \bN^\ell$ is
  not always join-orthogonalisable if $\iota(\n)\ge \ell-1$. By
  Remark~\ref{remark:SJO-K-AJO-P}, we have
  $$q\left(\bigcup_{i\in [k]}P_{m_i}\vee \left(P_{n_1} \cup (\ell-1) P_1\right)\right)\geq 3$$
  for any $\m\in \bN^k$ and $n_1\in \bN$.
\end{example}

\section{Orthogonalisability of joins of graphs}
In the final section of this work, we return to our main motivation of determining conditions for a pair of 
graphs $G$ and $H$ to have $q(G \vee H)=2$. Results that follow from Section \ref{sec:suitability} are already summarised in Theorem~\ref{thm:main:nec-suff-q=2}. The next result is a direct consequence of  Theorem~\ref{thm:main:nec-suff-q=2}, Proposition~\ref{prop:weakly-suitable-without-delta-epsilon} and Corollary~\ref{cor:k-divisible-corollary}. 

\begin{corollary}\label{cor:k-divides-l-or-l+1}
Suppose $k$ divides either $\ell$ or $\ell+1$. Let $(\m,\n)\in \bN^k\times \bN^\ell$ and $\psi=\lceil \ell/k\rceil$. The following are equivalent: 
\begin{enumerate}[(i)]
\item \label{corollary:k-divides-l-part-1} $q(G\vee H)=2$ for all graphs $G,H$ which are sized by $(\m,\n)$;
\item \label{corollary:k-divides-l-part-2} $q(G\vee H)=2$ for $G=\bigcup_{i=1}^k P_{m_i}$ and $H=\bigcup_{j=1}^\ell P_{n_j}$; 
\item \label{corollary:k-divides-l-part-3} $\m\ge (\psi^k)$ and $|\n|-n_2^*-n_3^*\le |\m|\le |\n|+m_{\psi+1}^*+m_{\psi+2}^*$.
\item \label{corollary:k-divides-l-part-4} $\m\ge (\psi^k)$ and $\n \preceq_{w} (3+|\m|-\ell,3^{\ell-1})$  and $|\n|\geq |\m|-m_{\psi+1}^*
-m_{\psi+2}^*$.
\end{enumerate}
\end{corollary}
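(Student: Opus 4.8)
The plan is to assemble the four-fold equivalence as a short chain of previously established results, with the divisibility hypothesis entering at exactly one point. First I would note that statements~(\ref{corollary:k-divides-l-part-1}) and~(\ref{corollary:k-divides-l-part-2}) here are, word for word, conditions~(ii) and~(iii) of Theorem~\ref{thm:main:nec-suff-q=2}; since that theorem already asserts (ii)$\iff$(iii) with no divisibility assumption, we obtain (\ref{corollary:k-divides-l-part-1})$\iff$(\ref{corollary:k-divides-l-part-2}) at once.

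Next I would identify statement~(\ref{corollary:k-divides-l-part-3}) with weak suitability. By Proposition~\ref{prop:weakly-suitable-without-delta-epsilon}(i), the pair $(\m,\n)$ is weakly suitable precisely when $\m\ge(\psi^k)$ and~\eqref{eq:weak-suitability-*} holds, which is verbatim condition~(\ref{corollary:k-divides-l-part-3}). The same proposition records that~\eqref{eq:weak-suitability-*} is equivalent to~\eqref{eq:-weak-suitability-majorization}, which is condition~(\ref{corollary:k-divides-l-part-4}); this gives (\ref{corollary:k-divides-l-part-3})$\iff$(\ref{corollary:k-divides-l-part-4}).

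It then remains only to link~(\ref{corollary:k-divides-l-part-1}) with weak suitability, and here the divisibility hypothesis is used. Statement~(\ref{corollary:k-divides-l-part-1}) is the assertion that $(\m,\n)$ is always join-orthogonalisable, i.e.\ condition~(ii) of Theorem~\ref{thm:main}, while weak suitability is condition~(iii) of that theorem. By Corollary~\ref{cor:k-divisible-corollary}, when $k$ divides $\ell$ or $\ell+1$ these two conditions are equivalent. Combining this with the identification of weak suitability and~(\ref{corollary:k-divides-l-part-3}) yields (\ref{corollary:k-divides-l-part-1})$\iff$(\ref{corollary:k-divides-l-part-3}). Together with the equivalences already obtained, the three links (\ref{corollary:k-divides-l-part-1})$\iff$(\ref{corollary:k-divides-l-part-2}), (\ref{corollary:k-divides-l-part-1})$\iff$(\ref{corollary:k-divides-l-part-3}) and (\ref{corollary:k-divides-l-part-3})$\iff$(\ref{corollary:k-divides-l-part-4}) establish the equivalence of all four statements.

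Since every implication is quoted directly from an earlier result, there is no genuine computational obstacle: the proof is purely one of assembly. The only point requiring care is bookkeeping, namely matching the numbered conditions correctly across Theorem~\ref{thm:main:nec-suff-q=2}, Theorem~\ref{thm:main} and Proposition~\ref{prop:weakly-suitable-without-delta-epsilon}---in particular recognising that~(\ref{corollary:k-divides-l-part-1}) corresponds to always join-orthogonalisability (condition~(ii) of Theorem~\ref{thm:main}) rather than to strong suitability---and confirming that Corollary~\ref{cor:k-divisible-corollary} is the unique place where the divisibility hypothesis is invoked.
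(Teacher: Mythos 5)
Your proposal is correct and matches the paper's proof, which states that the corollary is a direct consequence of exactly the three results you assemble: Theorem~\ref{thm:main:nec-suff-q=2} for (\ref{corollary:k-divides-l-part-1})$\iff$(\ref{corollary:k-divides-l-part-2}), Proposition~\ref{prop:weakly-suitable-without-delta-epsilon} for identifying (\ref{corollary:k-divides-l-part-3}) with weak suitability and for (\ref{corollary:k-divides-l-part-3})$\iff$(\ref{corollary:k-divides-l-part-4}), and Corollary~\ref{cor:k-divisible-corollary} as the sole point where divisibility is used. Your bookkeeping of which numbered conditions correspond across the cited results is accurate.
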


Note that Corollary~\ref{cor:k-divides-l-or-l+1} completely characterises the always join-orthogonalisable pairs $(\m,\n)$, when $k\leq 2$, i.e., $\m\in \bN$ or $\m\in \bN^2$. In the next example, we explain the case $k=1$.

\begin{example}
    Bjorkman~et al.~\cite[Example~4.5]{18-Bjorkman-q} proved that $q(P_m\vee P_1)=\lceil\frac{m+1}{2}\rceil$ for $m\ge2$. In particular, $q(P_m\vee P_1)=2$ if and only if  $1\le m\le 3$. If we set $k=1$ in Corollary~\ref{cor:k-divides-l-or-l+1}, the latter fact generalises as follows: If $m,\ell\in \bN$ and $\n\in\bN^{\ell}$, then
     $$q\left(G\vee \bigcup_{i\in [\ell]} H_{i} \right)=2$$ for all connected graphs $G$ of order $m$ and all graphs $H$ which are sized by $\n$, if and only if
     $$|\n|-n_2^*-n_3^*\leq m\leq |\n|+2,$$
     or, equivalently
     $$\ell \leq m \leq |\n|+2 \text{ and } \n \preceq_w (3+m-\ell,3^{\ell-1}).$$
      In particular, when $\n\leq (3^{\ell})$, we have  $q\left(P_{m}\vee \bigcup_{i\in [\ell]} P_{n_i} \right)=2$ if and only if
     $\ell\le m\le |\n|+2$. All such graphs with $|\n|\leq 3$ are illustrated in Table~\ref{tab:Pm-Pn-q=2}. Therefore 
     $q\left(P_{m}\vee (\ell P_1) \right)=2$ if and only if
     $\ell\le m\le \ell+2$. 

    \tikzset{
    every node/.style={draw, circle, fill=white, inner sep=1pt}
    }
\begin{table}
    \centering
    \begin{tabular}{c||c|c|c|}
         & $|\n|=1$ & $|\n|=2$ & $|\n|=3$\\
         \hline\hline
       $m=1$  &  \begin{tikzpicture}
\centering
  \draw[color=gray] (0,0)--(2,0); 
  \node at (0,0) { };
 \node  at (2,0) {}; 
\end{tikzpicture}
&  \begin{tikzpicture}
\centering
\foreach \x in {0}{
  \foreach \y in {-0.5,0.5}{
  \draw[color=gray] (0,\x)--(2,\y); 
    }
    }
  \node at (0,0) { };
\foreach \x in {-0.5,0.5}{
             \node at (2,\x) { };
    }
 \node (1)  at (2,-0.5) {}; 
 \node (2)  at (2,0.5) {}; 
 \draw[thick] (1)--(2);
\end{tikzpicture}
&     \begin{tikzpicture}
\centering
\foreach \x in {0}{
  \foreach \y in {-1,0,1}{
  \draw[color=gray] (0,\x)--(2,\y); 
    }
    }
      \node at (0,0) { };
\foreach \x in {-1,0,1}{
             \node at (2,\x) { };
    }
 \node (1)  at (2,-1) {}; 
 \node (2)  at (2,0) {}; 
 \node (3)  at (2,1) {}; 
\draw[thick] (1) -- (2) --(3);
\end{tikzpicture}
\\
       \hline
       $m=2$   & \begin{tikzpicture}
\centering
\foreach \x in {0}{
  \foreach \y in {-0.5,0.5}{
  \draw[color=gray] (2,\x)--(0,\y); 
    }
    }
  \node at (2,0) { };
\foreach \x in {-0.5,0.5}{
             \node at (0,\x) { };
    }
 \node (1)  at (0,-0.5) {}; 
 \node (2)  at (0,0.5) {}; 
 \draw[thick] (1)--(2);
\end{tikzpicture}
&  \begin{tikzpicture}
\centering
\foreach \x in {-0.5,0.5}{
  \foreach \y in {-0.5,0.5}{
  \draw[color=gray] (0,\x)--(2,\y); 
    }
    }
    \foreach \x in {-0.5}{
            \draw[thick] (0,\x)--(0,\x+1);
             \node at (0,\x) { };
    }
\foreach \x in {-0.5,0.5}{
             \node at (2,\x) { };
    }
 \node (0)  at (0,0.5) {}; 
\end{tikzpicture}
 \begin{tikzpicture}
\centering
\foreach \x in {-0.5,0.5}{
  \foreach \y in {-0.5,0.5}{
  \draw[color=gray] (0,\x)--(2,\y); 
    }
    }
    \foreach \x in {-0.5}{
            \draw[thick] (0,\x)--(0,\x+1);
             \node at (0,\x) { };
    }
\foreach \x in {-0.5,0.5}{
             \node at (2,\x) { };
    }
 \node (0)  at (0,0.5) {}; 
 \node (1)  at (2,-0.5) {}; 
 \node (2)  at (2,0.5) {}; 
 \draw[thick] (1)--(2);
\end{tikzpicture}& 
          \begin{tikzpicture}
\centering
\foreach \x in {-0.5,0.5}{
  \foreach \y in {-1,0,1}{
  \draw[color=gray] (0,\x)--(2,\y); 
    }
    }
    \foreach \x in {-0.5}{
            \draw[thick] (0,\x)--(0,\x+1);
             \node at (0,\x) { };
    }
\foreach \x in {-1,0,1}{
             \node at (2,\x) { };
    }
 \node (0)  at (0,0.5) {}; 
 \node (1)  at (2,-1) {}; 
 \node (2)  at (2,0) {}; 
 \node (3)  at (2,1) {}; 
\draw[thick]  (2) --(3);
\end{tikzpicture}
   \begin{tikzpicture}
\centering
\foreach \x in {-0.5,0.5}{
  \foreach \y in {-1,0,1}{
  \draw[color=gray] (0,\x)--(2,\y); 
    }
    }
    \foreach \x in {-0.5}{
            \draw[thick] (0,\x)--(0,\x+1);
             \node at (0,\x) { };
    }
\foreach \x in {-1,0,1}{
             \node at (2,\x) { };
    }
 \node (0)  at (0,0.5) {}; 
 \node (1)  at (2,-1) {}; 
 \node (2)  at (2,0) {}; 
 \node (3)  at (2,1) {}; 
\draw[thick]  (1)--(2) --(3);
\end{tikzpicture}
\\
       \hline
       $m=3$   & 
        \begin{tikzpicture}
\centering
\foreach \x in {0}{
  \foreach \y in {-1,0,1}{
  \draw[color=gray] (2,\x)--(0,\y); 
    }
    }
      \node at (2,0) { };
\foreach \x in {-1,0,1}{
             \node at (0,\x) { };
    }
 \node (1)  at (0,-1) {}; 
 \node (2)  at (0,0) {}; 
 \node (3)  at (0,1) {}; 
\draw[thick] (1) -- (2) --(3);
\end{tikzpicture}
& 
        \begin{tikzpicture}
\centering
\foreach \x in {-1,0,1}{
  \foreach \y in {-0.5,0.5}{
  \draw[color=gray] (0,\x)--(2,\y); 
    }
    }
 \foreach \x in {-1,0}{
            \draw[thick] (0,\x)--(0,\x+1);
             \node at (0,\x) { };
    }
\foreach \x in {-0.5,0.5}{
             \node at (2,\x) { };
    }
 \node (0)  at (0,1) {}; 
\end{tikzpicture}
 \begin{tikzpicture}
\centering
\foreach \x in {-1,0,1}{
  \foreach \y in {-0.5,0.5}{
  \draw[color=gray] (0,\x)--(2,\y); 
    }
    }
 \foreach \x in {-1,0}{
            \draw[thick] (0,\x)--(0,\x+1);
             \node at (0,\x) { };
    }
\foreach \x in {-0.5,0.5}{
             \node at (2,\x) { };
    }
 \node (0)  at (0,1) {}; 
 \node (1)  at (2,-0.5) {}; 
 \node (2)  at (2,0.5) {}; 
 \draw[thick] (1)--(2);
\end{tikzpicture}& 
       \begin{tikzpicture}
\centering
\foreach \x in {-1,0,1}{
  \foreach \y in {-1,0,1}{
  \draw[color=gray] (0,\x)--(2,\y); 
    }
    }
 \foreach \x in {-1,0}{
            \draw[thick] (0,\x)--(0,\x+1);
             \node at (0,\x) { };
    }
\foreach \x in {-1,0,1}{
             \node at (2,\x) { };
    }
 \node (0)  at (0,1) {}; 
\end{tikzpicture}
\begin{tikzpicture}
\centering
\foreach \x in {-1,0,1}{
  \foreach \y in {-1,0,1}{
  \draw[color=gray] (0,\x)--(2,\y); 
    }
    }
 \foreach \x in {-1,0}{
            \draw[thick] (0,\x)--(0,\x+1);
             \node at (0,\x) { };
    }
\foreach \x in {-1,0,1}{
             \node at (2,\x) { };
    }
 \node (0)  at (0,1) {}; 
 \node (1)  at (2,-1) {}; 
 \node (2)  at (2,0) {}; 
 \node (3)  at (2,1) {}; 
\draw[thick] (2) --(3);
\end{tikzpicture}
\begin{tikzpicture}
\centering
\foreach \x in {-1,0,1}{
  \foreach \y in {-1,0,1}{
  \draw[color=gray] (0,\x)--(2,\y); 
    }
    }
 \foreach \x in {-1,0}{
            \draw[thick] (0,\x)--(0,\x+1);
             \node at (0,\x) { };
    }
\foreach \x in {-1,0,1}{
             \node at (2,\x) { };
    }
 \node (0)  at (0,1) {}; 
 \node (1)  at (2,-1) {}; 
 \node (2)  at (2,0) {}; 
 \node (3)  at (2,1) {}; 
\draw[thick] (1) -- (2) --(3);
\end{tikzpicture}
\\
       \hline
       $m=4$   &  & 
        \begin{tikzpicture}
\centering
\foreach \x in {-1.5,-0.5,0.5,1.5}{
  \foreach \y in {-0.5,0.5}{
  \draw[color=gray] (0,\x)--(2,\y); 
    }
    }
    \foreach \x in {-1.5,-0.5,0.5}{
            \draw[thick] (0,\x)--(0,\x+1);
             \node at (0,\x) { };
    }
\foreach \x in {-0.5,0.5}{
             \node at (2,\x) { };
    }
 \node (0)  at (0,1.5) {}; 
\end{tikzpicture}       
        \begin{tikzpicture}
\centering
\foreach \x in {-1.5,-0.5,0.5,1.5}{
  \foreach \y in {-0.5,0.5}{
  \draw[color=gray] (0,\x)--(2,\y); 
    }
    }
    \foreach \x in {-1.5,-0.5,0.5}{
            \draw[thick] (0,\x)--(0,\x+1);
             \node at (0,\x) { };
    }
\foreach \x in {-0.5,0.5}{
             \node at (2,\x) { };
    }
 \node (0)  at (0,1.5) {}; 
 \node (1)  at (2,-0.5) {}; 
 \node (2)  at (2,0.5) {}; 
 \draw[thick] (1)--(2);
\end{tikzpicture} 
&
         \begin{tikzpicture}
\centering
\foreach \x in {-1.5,-0.5,0.5,1.5}{
  \foreach \y in {-1,0,1}{
  \draw[color=gray] (0,\x)--(2,\y); 
    }
    }
    \foreach \x in {-1.5,-0.5,0.5}{
            \draw[thick] (0,\x)--(0,\x+1);
             \node at (0,\x) { };
    }
\foreach \x in {-1,0,1}{
             \node at (2,\x) { };
    }
 \node (0)  at (0,1.5) {}; 
 \node (1)  at (2,-1) {}; 
 \node (2)  at (2,0) {}; 
 \node (3)  at (2,1) {}; 
\end{tikzpicture}
  \begin{tikzpicture}
\centering
\foreach \x in {-1.5,-0.5,0.5,1.5}{
  \foreach \y in {-1,0,1}{
  \draw[color=gray] (0,\x)--(2,\y); 
    }
    }
    \foreach \x in {-1.5,-0.5,0.5}{
            \draw[thick] (0,\x)--(0,\x+1);
             \node at (0,\x) { };
    }
\foreach \x in {-1,0,1}{
             \node at (2,\x) { };
    }
 \node (0)  at (0,1.5) {}; 
 \node (1)  at (2,-1) {}; 
 \node (2)  at (2,0) {}; 
 \node (3)  at (2,1) {}; 
\draw[thick]  (2) --(3);
\end{tikzpicture}
  \begin{tikzpicture}
\centering
\foreach \x in {-1.5,-0.5,0.5,1.5}{
  \foreach \y in {-1,0,1}{
  \draw[color=gray] (0,\x)--(2,\y); 
    }
    }
    \foreach \x in {-1.5,-0.5,0.5}{
            \draw[thick] (0,\x)--(0,\x+1);
             \node at (0,\x) { };
    }
\foreach \x in {-1,0,1}{
             \node at (2,\x) { };
    }
 \node (0)  at (0,1.5) {}; 
 \node (1)  at (2,-1) {}; 
 \node (2)  at (2,0) {}; 
 \node (3)  at (2,1) {}; 
\draw[thick] (1) -- (2) --(3);
\end{tikzpicture}
\\
       \hline
       $m=5$   &  &  &     
       \begin{tikzpicture}
       
\centering
\foreach \x in {-2,-1,0,1,2}{
  \foreach \y in {-1,0,1}{
  \draw[color=gray] (0,\x)--(2,\y); 
    }
    }
    \foreach \x in {-2,-1,0,1}{
            \draw[thick] (0,\x)--(0,\x+1);
             \node at (0,\x) { };
    }
\foreach \x in {-1,0,1}{
             \node at (2,\x) { };
    }
 \node (0)  at (0,2) {}; 
 \node (1)  at (2,-1) {}; 
 \node (2)  at (2,0) {}; 
 \node (3)  at (2,1) {}; 
\end{tikzpicture}
\begin{tikzpicture}
       
\centering
\foreach \x in {-2,-1,0,1,2}{
  \foreach \y in {-1,0,1}{
  \draw[color=gray] (0,\x)--(2,\y); 
    }
    }
    \foreach \x in {-2,-1,0,1}{
            \draw[thick] (0,\x)--(0,\x+1);
             \node at (0,\x) { };
    }
\foreach \x in {-1,0,1}{
             \node at (2,\x) { };
    }
 \node (0)  at (0,2) {}; 
 \node (1)  at (2,-1) {}; 
 \node (2)  at (2,0) {}; 
 \node (3)  at (2,1) {}; 
\draw[thick]  (2) --(3);
\end{tikzpicture}
\begin{tikzpicture}
       
\centering
\foreach \x in {-2,-1,0,1,2}{
  \foreach \y in {-1,0,1}{
  \draw[color=gray] (0,\x)--(2,\y); 
    }
    }
    \foreach \x in {-2,-1,0,1}{
            \draw[thick] (0,\x)--(0,\x+1);
             \node at (0,\x) { };
    }
\foreach \x in {-1,0,1}{
             \node at (2,\x) { };
    }
 \node (0)  at (0,2) {}; 
 \node (1)  at (2,-1) {}; 
 \node (2)  at (2,0) {}; 
 \node (3)  at (2,1) {}; 
\draw[thick] (1) -- (2) --(3);

\end{tikzpicture}\\
\hline
    \end{tabular}
    \caption{All graphs of the form $P_m \vee \cup_{j=1}^{\ell} P_{n_j}$, having  $|\n|\leq 3$ and  $q=2$. Note that there are no such graphs for $m \geq 6$. }
    \label{tab:Pm-Pn-q=2}
\end{table}

\end{example}

\begin{example}
In~\cite[Theorem~3.4]{levene2023distinct} it was shown that if $G_i$ and $H_i$ are connected graphs with $\big||G_i|-|H_i|\big|\leq 2$, $i\in [k]$, then
$$q\left(\bigcup_{i\in [k]}G_{i}\vee \bigcup_{i\in [k]}H_{i}\right)=2.$$ Taking $\ell=k$ in Corollary~\ref{cor:k-divides-l-or-l+1},
we obtain a stronger result:
$q(G\vee H)=2$ for all pairs of graphs $(G,H)$ which are sized by $(\m,\n)\in \bN^k\times \bN^k$
if and only if
\begin{equation}\label{eq:k=l-condition}|\n|-n_2^*-n_3^*\le |\m|\le |\n|+m_2^*+m_3^*.\end{equation}
If $|m_i-n_i|\le 2$ for each $i\in [k]$, then~\eqref{eq:k=l-condition} holds, because
\begin{align*}
|\m|-|\n|=\sum_{i\in [k]}m_i-n_i&\le \sum_{i:m_i>n_i}m_i-n_i\\&=|\{i:m_i=n_i+1\}|+2|\{i:m_i=n_i+2\}|\\&=|\{i:m_i\ge n_i+1\}|+|\{i:m_i=n_i+2\}|\\&\le m_2^*+m_3^*,
\end{align*}
and the other inequality holds by symmetry. So we recover \cite[Theorem~3.4]{levene2023distinct} as a special case.
\end{example}

 \begin{example}\label{example:k=3,l=4}
    Consider the smallest case not covered by Corollary~\ref{cor:k-divides-l-or-l+1}, when $(k,\ell)=(3,4)$. We will show that  conditions (ii) and (iv) in Theorem~\ref{thm:main:nec-suff-q=2} are equivalent in this case. However, observe that in the case  $\m=(2^3)$ and $\n=(3,1^3)$,  condition (iv) is satisfied, but (i) is not, see also the proof of Proposition~\ref{prop:strict-inclusions}. %
    
    Observe that if $q(G\vee H)=2$ for all graphs $G,H$ which are sized by $(\m,\n)\in\bN^3\times \bN^4$, then Theorem~\ref{thm:main:nec-suff-q=2} implies 
    \begin{equation}\label{eq:k=3,l=4}
      \m\geq (2^3),\;|\n|-n_2^*-n_3^*\le |\m|\le |\n|+m_{3}^*+m_{4}^* \, \text{ and } \, \iota(\n)\leq 3.
    \end{equation}
  Let us prove that~\eqref{eq:k=3,l=4} is a sufficient condition for  $q(G\vee H)=2$ for all graphs $G,H$ which are sized by $(\m,\n)\in \bN^3\times\bN^4$. Assume without loss of generality that $\m$ and $\n$ are non-increasing. Observe first that by~\eqref{eq:k=3,l=4} we have $|\n|\geq |\m|-m_3^*-m_4^*=m_1^*+m_2^*+\sum_{j\geq 5}m_j^*\geq 6$. It follows that $\n\ge \mathbf{b}$ for some $\mathbf{b}\in \{(3,1^3),(2^2,1^2)\}$. Moreover, both possible values of $\mathbf{b}$ have $((2^3),\bfb)\in \widetilde{\LL}(3,4)$ (see the proof of Proposition~\ref{prop:strict-inclusions} for the first case; the second is easy to check).
  \begin{enumerate}[(a)]
      \item If $|\m|\geq|\n|$, then by~\eqref{eq:k=3,l=4} we have $0\leq |\m|-|\n|\leq m_3^*+m_4^*\leq 6$, so we can choose $\bdelta\in\{0,1,2\}^3$ with $\bdelta\leq (m_3^*,m_4^*)^*$ and $|\bdelta|=|\m|-|\n|$. Let $\bfp:=\m-(2^3)-\bdelta$ and 
      $\bfq:=\n-\mathbf{b}$. Observe that  $\bfp\geq (m_5^*,m_6^*,\ldots)^*$ and so $\bfp\in\bN_0^3$ and $\bfq\in \bN_0^4$   with $|\bfp|=|\bfq|$. By Lemma~\ref{lem:new1} we have $(\m-\bdelta,\n)=((2^3)+\bfp,\mathbf{b}+\bfq)\in \widetilde{\LL}(3,4)$.
      \item Similarly, if $|\m|<|\n|$, we aim to prove that $(\m,\n)=((2^3)+\bfp,\bfb+\bfq+\bepsilon)$ for one of $\mathbf{b}\in \{(3,1^3),(2^2,1^2)\}$, some $\bfp\in\bN_0^3$ and $\bfq\in \bN_0^4$   with $|\bfp|=|\bfq|$, and $\bepsilon\in\{0,1,2\}^4$. %
      Then, by Lemma~\ref{lem:new1} we will have $(\m,\n-\bepsilon)=((2^3)+\bfp,\mathbf{b}+\bfq)\in \widetilde{\LL}(3,4)$.
      Since $|\bepsilon|=|\n|-|\m|$, we need to show: 
$$\n\ge \mathbf{b}\text{ and }(\n-\mathbf{b})_1^*+(\n-\mathbf{b})_2^* \geq |\n|-|\m|$$ for one of $\mathbf{b}\in \{(3,1^3),(2^2,1^2)\}$.
Using~\eqref{eq:k=3,l=4} and $|\m|\geq 6$, the difference $|\n|-|\m|$ is bounded by two inequalities:
\begin{align*}
|\n|-|\m| &\leq n_2^*+n_3^* \\
|\n|-|\m| &\leq n_2^*+n_3^*+\sum_{j \geq 4}n_j^*-2,
\end{align*}
where the second inequality is weaker in most cases. 
Consider the following cases:
\begin{itemize}
\item If $\n \geq (3,1^3)$ and $n_5^* \geq 1$, then 
$(\n-\mathbf{b})_1^*+(\n-\mathbf{b})_2^*=n_2^*+n_3^*$
for $\mathbf{b}=(3,1^3).$
\item If $\n \geq (3,1^3)$, $n_5^* =0$, and $n_4^*\leq 1$, then $|\n|-|\m| \leq n_2^*+n_3^*+n_4^*-2$ (by the second inequality above), and 
$(\n-\mathbf{b})_1^*+(\n-\mathbf{b})_2^*=n_2^*+n_3^*+n_4^*-2$
for $\mathbf{b}=(3,1^3).$
\item If $\n \geq (3,1^3)$, $n_5^* =0$, and $n_4^*\geq 2$, then $\n \geq (2^2,1^2)$ and 
$(\n-\mathbf{b})_1^*+(\n-\mathbf{b})_2^*=n_2^*+n_3^*$
for $\mathbf{b}=(2^2,1^2)$. 
\item If  $\n  \not\geq (3,1^3)$, then $\n \geq (2^2,1^2)$ and $n_3^*=0$. In this case $|\n|-|\m| \leq n_2^*+n_3^*-2$, and 
$(\n-\mathbf{b})_1^*+(\n-\mathbf{b})_2^*=n_2^*+n_3^*-2$ for $\mathbf{b}=(2^2,1^2).$
\end{itemize}
\end{enumerate}
In both cases we have $(\m,\n)\in \LL(3,4)$ by Remark~\ref{remark:D-sets-and-AJO}(\ref{remark:D-vs-tilde(D)}) and thus $(\m,\n)$ is always join-orthogonalisable by Remark~\ref{remark:D-AJO}.
\end{example}

\begin{question}
    We end with an open question: for which pairs $(k,\ell)$ are the conditions (ii) and (iv) in Theorem~\ref{thm:main:nec-suff-q=2} equivalent? We have seen that if $k$ divides $\ell$ or $\ell+1$, or $(k,\ell)=(3,4)$, then they are; for $(k,\ell)=(5,7)$ they are not (see the proof of Proposition~\ref{prop:strict-inclusions}).
\end{question}

\section*{Acknowledgements}
Polona Oblak and Helena \v Smigoc received funding from the Slovenian Research Agency, research core funding no.~P1-0222. The authors wish to thank the anonymous referee for their valuable comments.

\bibliographystyle{amsplain}
\bibliography{references}

\end{document}